\documentclass[11pt]{amsart}
\usepackage{tikz-cd}
\usepackage{amsmath}
\usepackage{amssymb}
\usepackage{amsthm}
\usepackage{latexsym}
\usepackage{graphicx}
\usepackage{hyperref}
\usepackage{enumerate}
\usepackage[all]{xy}
\usepackage{amsfonts}
\usepackage{amstext}
\usepackage{mathrsfs}
\usepackage{lingmacros}
\usepackage{pdfsync}
\usepackage[colorinlistoftodos]{todonotes}
\usepackage{imakeidx}
\usepackage{hyperref}
\hypersetup{
  colorlinks   = true,    
  urlcolor     = blue,    
  linkcolor    = blue,    
  citecolor    = red      
}

\linespread{1.0}
\setlength{\unitlength}{1cm} \setlength{\topmargin}{0cm}
\setlength{\textheight}{22cm} \setlength{\oddsidemargin}{1cm}
\setlength{\textwidth}{14cm} \setlength{\voffset}{-1cm}

\newtheorem{thm}{Theorem}[section]
\newtheorem{cor}[thm]{Corollary}
\newtheorem{lem}[thm]{Lemma}
\newtheorem*{lem*}{Lemma}
\newtheorem{prop}[thm]{Proposition}
\newtheorem{defn}[thm]{Definition}

\newtheorem{notation}[thm]{Notation}
\newtheorem{rem}[thm]{Remark}
\newtheorem{corintro}{Corollary}
\newtheorem{thmintro}{Theorem}


\newcommand{\bbA}{\mathbb{A}}
\newcommand{\bbC}{\mathbb{C}}

\newcommand{\bbG}{\mathbb{G}}
\newcommand{\bbH}{\mathbb{H}}
\newcommand{\bbN}{\mathbb{N}}
\newcommand{\bbR}{\mathbb{R}}
\newcommand{\bbZ}{\mathbb{Z}}
\newcommand{\bo}{\mathbf{o}}

\newcommand{\cC}{\mathcal{C}}
\newcommand{\cE}{\mathcal{E}}
\newcommand{\cF}{\mathcal{F}}
\newcommand{\cH}{\mathcal{H}}
\newcommand{\cL}{\mathcal{L}}
\newcommand{\cO}{\mathcal{O}}
\newcommand{\cP}{\mathcal{P}}
\newcommand{\cR}{\mathcal{R}}

\newcommand{\cX}{\mathcal{X}}
\newcommand{\cY}{\mathcal{Y}}
\newcommand{\fa}{\mathfrak{a}}
\newcommand{\fb}{\mathfrak{b}}

\newcommand{\fd}{\mathfrak{d}}

\newcommand{\fg}{\mathfrak{g}}

\newcommand{\fn}{\mathfrak{n}}
\newcommand{\fo}{\mathfrak{o}}
\newcommand{\ft}{\mathfrak{t}}
\newcommand{\fF}{\mathfrak{F}}
\newcommand{\fT}{\mathfrak{T}}
\newcommand{\fR}{\mathfrak{R}}
\newcommand{\fX}{\mathfrak{X}}

\newcommand{\sr}{\mathsf{r}}
\newcommand{\dB}{\hat{B}}
\newcommand{\db}{\hat{\fb}}
\newcommand{\dG}{\hat{G}}
\newcommand{\dg}{\hat{\fg}}
\newcommand{\dn}{\hat{\fn}}
\newcommand{\dT}{\hat{T}}
\newcommand{\dt}{\hat{\ft}}
\newcommand{\dHc}{\hat{H}_\chi}
\newcommand{\ap}{\left[{\textup{AP}([T,\cO(\chi)])}\right]}
\newcommand{\preap}{\textup{AP}([T,\cO(\chi)])}
\newcommand{\aq}{G(F)Z_G(\bbA_F)\backslash G(\bbA_F)}
\newcommand{\Ltwoxi}{L^2(G(F)\backslash G(\bbA_F),\xi)}
\newcommand{\rA}{{}^r\!A}
\newcommand{\rR}{\hat{R}}
\newcommand{\ra}{\hat{\alpha}}
\newcommand{\rb}{\hat{\beta}}
\newcommand{\id}{\mathop{\hbox{\rm id}}\nolimits}
\newcommand{\cCO}{\mathcal{C}(\Omega)}
\DeclareMathOperator{\Gal}{Gal}
\DeclareMathOperator{\res}{res}
\DeclareMathOperator{\Hom}{Hom}
\DeclareMathOperator{\Lie}{Lie}
\DeclareMathOperator{\Val}{Val}
\DeclareMathOperator{\re}{Re}
\DeclareMathOperator{\im}{Im}
\DeclareMathOperator{\Res}{Res}
\DeclareMathOperator{\codim}{codim}
\DeclareMathOperator{\supp}{supp}
\makeatletter
\pdfstringdefDisableCommands{\let\HyPsd@CatcodeWarning\@gobble}
\makeatother
\makeindex[columns=2]

\begin{document}
\title{On the unramified spherical automorphic spectrum} 

\author{Marcelo De Martino}
\address{M.D.M: Department of Eletronics and Informations Systems\\
Clifford Research Group\\
9000 Ghent\\
Belgium\\
email: marcelo.goncalvesdemartino@ugent.be}
\author{Volker Heiermann}
\address{
V.H.: Aix Marseille Universit\'e\\
CNRS \\
Centrale Marseille\\
I2M UMR 7373\\
13453\\ 
Marseille\\
France\\
email: volker.heiermann@univ-amu.fr}
\author{Eric Opdam}
\address{E.O.: Korteweg de Vries Institute for Mathematics\\
University of Amsterdam\\
P.O. Box 94248\\
1090 GE Amsterdam\\
The Netherlands\\
 email: e.m.opdam@uva.nl}

\keywords{}
\subjclass[2010]{Primary 11F70; Secondary 22E55, 20C08, 11F72.}

\begin{abstract}  
For an unramified  connected reductive group $G$ defined over a number field $F$, consider the part of the spherical automorphic spectrum with cuspidal support $[T,\cO(\chi)]$, where $T$ is a maximal torus and $\chi$ is an unramified automorphic character. We define a normalization of the Eisenstein series and we give the precise spectral decomposition of the closure of the subspace spanned by the normalized pseudo-Eiseinstein series. The proof uses residue distributions which were introduced by the third author (in joint work with G. Heckman) in the study of graded affine Hecke algebras, which is an ingredient of a purely local nature. 

In the case when $G$ is split and $\chi$ is the trivial character, we show that the normalized spectrum is in fact the whole spherical automorphic spectrum. The necessary argument to conclude the result in the split case are based on combinatorial results proved in \cite{DMHO}.
\end{abstract}

\maketitle

\section{Statement of the results}

\subsection{On reductive groups}
Consider the triple $(G,B,T)$, where $G$\index{$G$} is a connected reductive quasi-split algebraic group, defined over a number field $F$, $B=TU$ is an $F$-Borel subgroup, with $T$\index{$G$!$T$} a maximal torus in $B$\index{$G$!$B$} and $U$\index{$G$!$U$} the unipotent radical of $B$.  We let $A$\index{$G$!$A$} denote the maximal $F$-split torus in $T$ and we let $E$ be a minimal Galois extension of $F$ which is unramified
in each non-Archimedean place (we do not assume that
the Archimedean places split). Put $\Gamma := \Gal(E/F)$\index{$\Gamma$}. We let $Z_{G}$\index{$G$!$Z_G$} be the centre of $G$.

Given any reductive group $H$ over $F$, we let $X^*(H)$ and $X_*(H)$ denote the lattices of (absolute) characters and cocharacters of $H$, respectively. For $H = T$, both lattices are endowed with natural actions of the Galois group $\Gamma$. We will use a $\Gamma$-superscript to denote the invariants for the $\Gamma$-action. Consider the real vector space $\fa := \bbR\otimes X_*(T)^\Gamma = \bbR\otimes X_*(A)$\index{$\fa$}, which is viewed as the real Lie algebra of $A$, and its dual $\fa^* := \bbR\otimes X^*(T)^\Gamma$\index{$\fa$!$\fa^*$}. We let $R:=R(G,A)\subseteq X^*(A)$\index{Root Systems!$R$ relative roots} be the relative root system of $G$ and $W_G := W(G,A)$\index{Weyl Groups!$W_G$} be its Weyl group (remark that $G(F)$ contains representatives for $W_G$ as $W_G \cong \textup{Norm}_{G(F)}T(F)/\textup{Cent}_{G(F)}T(F))$. We let also $\tilde{R}\subseteq X^*(T)$\index{Root Systems!$\tilde{R}$ absolute roots} be the absolute root system and denote its elements by $\tilde\alpha$. We recall that, for every $\alpha\in R$, we can write $\alpha = \res(\tilde\alpha)$, where  $\res:X^*(T)\to X^*(A)$ is the restriction map coming from the inclusion $A\subseteq T$. The subspace of $\fa^*$\index{$\fa$!$\fa^*$} spanned by the relative roots will be denoted $\fa^{G*}\subseteq \fa^*$\index{$\fa$!$\fa^{G*}$} and its complexification will be denoted by $\fa_\bbC^{G*}\subseteq \fa^*_\bbC$.

\subsection{Unramified characters and the automorphic space}

We will denote by $\bbA_F$\index{$\bbA_F$} and $\bbA_E$\index{$\bbA_F$!$\bbA_E$} the ring of ad\`eles with respect to the fields $F$ and $E$. 
Given $\lambda \in X^*(T)$, let $|\lambda|$ denote the adelic absolue value of $\lambda$ and denote by $T^1 = \cap \{\ker |\lambda|\mid\lambda\in X^*(T)\}$. Following \cite[I.1.4]{MW2}, define $X_T$\index{$X_T$} to be the group of all continuous homomorphisms $T(\bbA_F)\to \bbC^\times$ which are trivial on $T^1$. Since $T(\bbA_F)/T^1 \cong \Val_F\otimes X_*(T)^\Gamma$, and $\Val_F = \bbR_+ = e^\bbR$ (recall we assume $F$ to be a number field),
if $\dT = \Hom(X_*(T),\bbC^\times)$\index{$G$!$\dT$} denotes the dual torus, we can identify $\fa_\bbC^* = \Lie(\dT)^\Gamma$ and
the group $X_T$ satisfy $X_T \cong \Lie(\dT)^\Gamma$.
 Furthermore, denote by $X^G_T\subseteq X_T$\index{$X_T$!$X_T^G$} the subgroup consisting of all elements of $X_T$ which vanish on $Z_G(\bbA_F)$, so that $X_T^G\cong \fa_\bbC^{G*}$.

Let $\chi = \otimes_v\chi_v$\index{$\chi$} be a unitary, everywhere unramified automorphic character of $T$ and denote by $\xi=\chi|_{Z_G}$\index{$\chi$!$\xi$} its restriction to the centre. We let $\Ltwoxi$ be the space of all functions on $G(F)\backslash G(\bbA_F)$ on which $Z_G(\bbA_F)$ acts by $\xi$ and that are square integrable modulo the centre. In this paper, we are interested in spherical automorphic forms which are supported by a cuspidal datum of the type
 $\fX =[T, \cO(\chi)]$\index{$[T,\chi]$} (for the decomposition of the automorphic space in terms of cuspidal datum we refer the reader to \cite{La1} and \cite{MW2}), where $\cO(\chi) = \{\chi_\nu:=\chi\otimes \nu\mid \nu\in X_T^G\}$\index{$\cO(\chi)$} denotes its orbit of unramified character twists. For each place $v$ of $F$, we let $K_v$\index{$G$!$K_v$} be a maximal compact subgroup of $G_v$\index{$G$!$G_v$} with the property that, at each non-Archimedean places, $K_v$ is a hyperspecial compact subgroup of $G_v$. Let $K :=\prod_v K_v$\index{$G$!$K$} be the corresponding maximal compact subgroup of $ G(\bbA_F)$\index{$G$!$G(\bbA_F)$}.  The main object of study in this manuscript is the space\index{$\Ltwoxi_{[T,\cO(\chi)]}^K$}
\begin{equation}\label{eq:sphspace}
\Ltwoxi_{[T,\cO(\chi)]}^K,
\end{equation}
of $K$-fixed vectors inside the space of automorphic forms supported by $[T,\cO(\chi)]$. It is known that this space is topologically generated by the so-called pseudo-Eisenstein series, $\theta_\phi$, which are very particular functions in $L^2(G(F)\backslash G(\bbA_F),\xi)$ given as wave-packets of Eisenstein series with Paley-Wiener coefficients. Below, in Section \ref{s:NormEisen}, we will recall the definitions.

\subsection{The spectral side}

We shall phrase our results in terms of Arthur parameters. 
Let $\dG$\index{$G$!$\dG$} be the complex dual group of $G$ and $\dT = \Hom(X_*(T),\bbC^\times)$\index{$G$!$\dT$} be the dual torus, which has a decomposition $\dT= \dT_{\textup{rs}}\dT_{\textup{u}}$ into its real split and unitary parts. Their complex Lie algebras will be denoted by $\dg$\index{$\dg$} and $\dt$\index{$\dg$!$\dt$}, respectively. Denote by ${}^LG = \dG\rtimes \Gamma$\index{$G$!${}^LG$} the (finite form of the) $L$-group of $G$ and denote by ${}^LT= \dT\rtimes \Gamma$\index{$G$!${}^LT$} the $L$-torus. Let  $W_{E/F}$\index{$W_{E/F}$ relative Weil group} be the relative Weil group. From \cite{La2}, let $\varphi_\chi\in H^1_{\textup{cont}}(W_{E/F},\dT_{\textup{u}})$ be a (unitary) Langlands parameter associated with $\chi$. For any character $\nu\in X_T^G$, we let $\varphi_{\chi,\nu}=\varphi_\chi\varphi_\nu$ denote the Langlands parameter of the twisted character $\chi_\nu$. The twisted parameter $\varphi_{\chi,\nu}$ is unitary if and only if $\nu$ is unitary, that is, $\nu$ is in $i\fa^{*}$.
\begin{defn}\label{d:APspace}
Denote by $\preap$\index{$\ap$!$\preap$} the space of continuous homomorphisms $\psi:W_{E/F}\times SL(2,\bbC)\to{}^LG$ satisfying
\begin{itemize}
\item[(a)] $\rho:=\psi|_{SL(2,\bbC)}$ is algebraic (and its image lies in $\dG$),
\item[(b)] for all $w\in W_{E/F}$, the projection of  $\psi(w)$ onto $\Gamma$ is the same as the image of $w$ by the map $W_{E/F}\to \Gamma$,
\item[(c)] $\psi|_{W_{E/F}}$ is bounded and conjugate to $\varphi_{\chi,\nu}$ for some unitary $\nu\in X_T^G.$
\end{itemize}
The dual group $\dG$ acts on this space by conjugation and let\index{$\ap$}
\begin{equation}\label{eq:ArthurParSpace}
    \ap = \preap/\dG.
\end{equation}
\end{defn}
Let also $W_{\cO(\chi)}$\index{Weyl Groups!$W_{\cO(\chi)}$} denote the subgroup of $W_G$\index{Weyl Groups!$W_G$} whose elements satisfy $w\cO(\chi) = \cO(\chi)$. 
In Section \ref{s:NormEisen} we argue that there is a bijection between $\ap$ with a well-understood subset $\Xi  \subset W_{\cO(\chi)}\backslash \fa^{G*}$\index{$\Xi$}. Using this bijection, we can define the Hilbert space $L^2(\Xi,\mu_0)$, for an explicitly defined positive measure $\mu_0$\index{Measures!$\mu_0$} on $\Xi$, smooth on each component of $\Xi$.

\subsection{Statements}
Our goal is to decompose the space $\Ltwoxi_{[T,\cO(\chi)]}^K$ of (\ref{eq:sphspace}) as a module for the global spherical Hecke algebra $\cH(G(\bbA_F),K) = \otimes_v'\cH(G_v,K_v)$\index{$\cH(G(\bbA_F),K)$} and explicitly describe the spectral measure. Here, $\cH(G_v,K_v)$\index{$\cH(G(\bbA_F),K)$!$\cH(G_v,K_v)$} denotes the corresponding spherical Hecke algebra of each local place $v$. 
We will separate this problem into two parts. First, we propose a modification of the Eisentein series $\cE(\nu,\chi,g)$ and define the normalized Eisenstein series $\cE_0(\nu,\chi,g)$ (see Definition \ref{def:normEisen}), which as functions of $\nu\in X_T^G$, they are holomorphic and $W_{\cO(\chi)}$-invariant. Taking wave packets with respect to these normalized Eisenstein series we define the normalized subspace $\Ltwoxi_{[T,\cO(\chi)],0}^K$\index{$\Ltwoxi_{[T,\cO(\chi)]}^K$!$\Ltwoxi_{[T,\cO(\chi)],0}^K$} inside $\Ltwoxi_{[T,\cO(\chi)]}^K$\index{$\Ltwoxi_{[T,\cO(\chi)]}^K$} and afterwards we deal with its orthogonal complement. 

The algebra $\cH(G(\bbA_F),K)$ is equipped with a $\ast$-structure coming from each local factor. It acts by convolution on the automorphic space and diagonally on $\Xi$, due to the fact that the Eisenstein series (and also their normalized version) form a family of eigenfunctions for the Hecke operators. Our first main result is the following:

\begin{thmintro}\label{thm:LowerBound}
The transform $\fF:\Ltwoxi_{[T,\cO(\chi)],0}^K\to L^2(\Xi,\mu_0)$\index{$\fF$}
\begin{equation}\label{eq:FTransf}
\fF(f)(\nu) = \int_{\aq} f(g) \cE_0(-\nu,\overline{\chi},g) dg
\end{equation}
defined for $\nu\in\Xi$ is an isometric isomorphism of $\ast$-unitary $\cH(G(\bbA),K)$-modules.
\end{thmintro}

This lower bound to the unramified spherical automorphic spectrum, Theorem \ref{thm:LowerBound}, has some important consequences. Given a parameter $\psi$, by item (c) of Definition \ref{d:APspace}, we let $\dHc \subseteq \dG$\index{$G$!$\dHc$} be the identity component of the centraliser in ${}^LG$ of the image of $\varphi_\chi$. This is a reductive subgroup containing the subtorus $\dT^\Gamma$ of $\hat{T}$ of Galois invariants. Assuming further that $\psi$ is discrete implies that the group $\dHc$ is a semisimple subgroup of $\dG$ of maximal rank and that $\rho = \rho_\bo$ is a homomorphism $SL(2,\bbC)\to \dHc\subseteq \dG$ corresponding to a distinguished nilpotent orbit $\bo$ of $\Lie(\dHc)$. Let $2\lambda(\bo) \in  \fa^*_{\bbC}\subseteq \dt$ be the corresponding weighted Dynkin diagram of the orbit $\bo$ (see \cite[Chapter 5]{C}). Let also $\nu(\bo)\in X_T$ be the image of $\lambda(\bo)$ under the surjection $\fa_{\bbC}^*\to X_T$. We thus have an automorphic character $\chi_{\nu(\bo)}$ whose local (unramified) factors will be denoted as $\chi_{v,\nu(\bo)}$. For any place $v$, denote by $\pi_{\chi_{v,{\nu(\bo)}}}$ the unique irreducible spherical subquotient of the (unramified) principal series representation of the local group $G_v$ induced by the character $\chi_{v,\nu(\bo)}$ of $A_v$\index{$G$!$A_v$} and denote by $\pi_{\chi_{\nu(\bo)}}$ the irreducible representation of $G(\bbA_F)$ given by
\begin{equation}\label{eq:localfactors}
\pi_{\chi_{\nu(\bo)}} = \otimes'_v\pi_{\chi_{v,{\nu(\bo)}}},
\end{equation}
which has a $ K $-invariant vector.  Each $K$-invariant normalised Eisenstein series $\cE_0(\nu(\bo),\chi,g)$ generates an admissible $G(\bbA_F)$-subrepresentation of the automorphic space $L^2(G(F)\backslash G(\bbA_F),\xi)_{[T,\cO(\chi)]}$ (\cite{HC}, see also the survey \cite[Theorem 3.5]{Co}) whose space of $K$-invariants is one dimensional. From the unitary structure of the automorphic space, it follows that this subrepresentation is irreducible and hence isomorphic to $\pi_{\chi_{\nu(\bo)}}$. We will say that two nilpotent orbits $\bo$ and $\bo'$ of $\Lie(\dHc)$ are equivalent if they are conjugate under the adjoint action of $C_{\dG}(\varphi_\chi)$. The $G(\bbA_F)$-representations $\pi_{\chi_{\nu(\bo)}}$ and $\pi_{\chi_{\nu(\bo')}}$ are then equivalent if and only if $\bo$ and $\bo'$ are equivalent. Now let\index{$\Ltwoxi_{[T,\cO(\chi)]}^K$!$L^2(G(F)\backslash G(\bbA_F),\xi)_{[T,\cO(\chi)],0}^{\textup{disc,sph}}$}
\[
L^2(G(F)\backslash G(\bbA_F),\xi)_{[T,\cO(\chi)],0}^{\textup{disc,sph}}
\]
denote the smallest closed and $G(\bbA_F)$-invariant subspace of the discrete part of $L^2(G(F)\backslash G(\bbA_F),\xi)_{[T,\cO(\chi)],0}$ (i.e., the closure of the span of all topologically irreducible $G(\bbA_F)$-subrepresentations) containing its $K$-invariant vectors. We obtain the following representation-theoretic corollaries:

\begin{corintro}\label{cor:DiscreteSpec}
The space $L^2(G(F)\backslash G(\bbA_F),\xi)_{[T,\cO(\chi)],0}^{\textup{disc,sph}}$ is multiplicity-free and decomposes as
\begin{equation*}
L^2(G(F)\backslash G(\bbA_F),\xi)_{[T,\cO(\chi)],0}^{\textup{disc,sph}} = \bigoplus_{\bo} \pi_{\chi_{\nu(\bo)}} ,
\end{equation*}
with the sum indexed by representatives $\{\bo\}$ of the finite set of equivalence classes of distinguished unipotent orbits of $\dHc\subseteq {\hat G}$.
\end{corintro}

\begin{corintro}\label{cor:Unitary}
For each distinguished orbit $\bo$ of $\dHc\subseteq{\hat G}$, the local factors $\pi_{\chi_{v,\nu(\bo)}}$ of $\pi_{\chi_{\nu(\bo)}}$ are unitary representations for all places $v$.
\end{corintro}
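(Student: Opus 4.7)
The plan is to deduce the local unitarity directly from the global unitarity of $\pi_{\lambda(\bo)}$. By Corollary~\ref{cor:DiscreteSpec}, $\pi_{\lambda(\bo)}$ appears as a closed, irreducible $G(\bba)$-invariant subspace of the Hilbert space $L^2(G(F)Z_G\backslash G(\bba))_{[T,1]}$, so restriction of the $L^2$-inner product endows $\pi_{\lambda(\bo)}$ with a $G(\bba)$-invariant positive definite Hermitian form $\langle\cdot,\cdot\rangle$. The remaining task is to transfer this unitary structure to each local tensor factor $\pi_{v,\lambda(\bo)}$.

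Fix a place $v_0$. For every $v\neq v_0$, I would choose a nonzero $K_v$-spherical vector $e_v^0\in \pi_{v,\lambda(\bo)}$, which exists (and is unique up to scalar) because $\pi_{v,\lambda(\bo)}$ is by construction the irreducible spherical constituent of an unramified principal series. Next, I would form the injective linear map
\[
\iota_{v_0}: \pi_{v_0,\lambda(\bo)} \hookrightarrow \pi_{\lambda(\bo)},\qquad \xi\longmapsto \xi\otimes\bigotimes_{v\neq v_0} e_v^0.
\]
Since $G_{v_0}$ acts on the restricted tensor product trivially on the tensor factors indexed by $v\neq v_0$, the map $\iota_{v_0}$ is $G_{v_0}$-equivariant. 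Pulling the global form back along $\iota_{v_0}$ produces a Hermitian form
\[
\langle \xi,\eta\rangle_{v_0} := \langle \iota_{v_0}(\xi),\iota_{v_0}(\eta)\rangle
\]
on $\pi_{v_0,\lambda(\bo)}$ which is $G_{v_0}$-invariant (by restriction of $G(\bba)$-invariance) and positive definite (by injectivity of $\iota_{v_0}$ together with the positive definiteness of $\langle\cdot,\cdot\rangle$). This exhibits $\pi_{v_0,\lambda(\bo)}$ as a unitary representation of $G_{v_0}$.

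I do not expect any substantive obstacle in this final step: the argument is essentially a one-line consequence of Corollary~\ref{cor:DiscreteSpec} combined with the standard principle that an invariant inner product on a restricted tensor product of irreducible admissible representations disassembles along its local factors once spherical vectors are fixed at all but one place. All of the genuine difficulty sits upstream, in Theorem~\ref{thm:MainTheorem2} and Corollary~\ref{cor:DiscreteSpec}.
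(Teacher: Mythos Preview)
Your proposal is correct and matches the paper's intent: the paper states Corollary~\ref{cor:Unitary} without proof, treating it as an immediate consequence of Corollary~\ref{cor:DiscreteSpec}, and your argument spells out exactly the standard passage from global to local unitarity via the restricted tensor product.
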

Corollary \ref{cor:Unitary} is in accordance to Arthur's conjectures \cite{A}, and we note that in \cite{Mi}, for $G$ split and $\chi$ trivial, results on the unitarity of the  unique irreducible spherical subquotient of the (unramified) principal series of the local factors with parameter $\nu(\bo)$ were also obtained. 

We expect that when $F$ is a function field the results analogous to Theorem \ref{thm:LowerBound} and their corollaries hold true. For such fields we are currently unaware if the existence of an origin in the sense  of Definition \ref{def:origin} and Proposition \ref{prop:originNF} holds true. In the absence of such origin, the explicit formulas for the spectral measures involved will need to be modified accordingly.

As for the orthogonal complement to the normalized spectrum, suppose that $G$ is split and $\chi$ is the trivial character. In \cite{DMHO}, the existence of a cascade of contour shifts with good properties was established. Using this notion, we can show that even without the normalization of the Eisenstein series there is no additional contribution to the spectrum\footnote{In fact, we prove a more general result in Theorem \ref{t:MainResult2}.}. We thus arrive at the second
main result of this paper:

\begin{thmintro}\label{thm:MainTheorem}
Let $G$ be split. 
The orthocomplement of the normalized spherical subspace is zero:
\begin{equation}\label{eq:norm=all}
\Ltwoxi_{[T,\cO(1)]}^K = \Ltwoxi_{[T,\cO(1)],0}^K.
\end{equation}
\end{thmintro}

For this last statement, which is equivalent to the statement that there is no extra contribution to the spectrum due to zeroes of $L$-function occuring in the denominators of the expression of the inner product between pseudo-Eisenstein series, we used a classical contour-shift approach building on the 
contributions of Langlands \cite{La1}, \cite{La2}, Jacquet \cite{J}, Kim \cite{K1}, \cite{K2},  Moeglin \cite{M1}, \cite{M2}, 
Moeglin-Waldspurger \cite{MW1}, \cite{MW2} and others. Recently, a breakthrough geometric explanation for this surprising behaviour has been given by David Kazhdan and Andrei Okounkov \cite{KO}. 
\section{The inner product formula}\label{s:InnerProd}

\subsection{Eisenstein and pseudo-Eisenstein series}
In order to fix notations, we review the inner product formula for pseudo Eisenstein series (see \cite{La1} and \cite{MW2}). Given $[T , \cO(\chi)]$, we extend the unramified automorphic character $\chi$ of $T$ to the whole Borel subgroup $B$, by setting it to be trivial in the unipotent radical $U$. Let $t_B : G(\bbA_F) \to T(\bbA_F)/(T(\bbA_F) \cap K)$ be the map obtained from the Iwasawa decomposition $G(\bbA_F) = B(\bbA_F)K$. Composing $t_B$ with the adelic norm map yields a function $G(\bbA_F) \to T(\bbA_F)/T^1 \cong X_*(T)^\Gamma\otimes \Val_F$ , denoted $g\mapsto |t_B(g)|$. Since $\chi$ is trivial on $T(\bbA_F) \cap K$, we have that $\chi(t_B(g))$ is well defined. We let the Eisenstein series corresponding to $\chi$ be defined by \index{$\cE(\nu,\chi,g)$}
\begin{equation}\label{eq:Eiseinstein}
\cE(\nu,\chi,g) = \sum_{\gamma \in B(G)\backslash G(F)} \chi(t_B(\gamma g))|t_B(\gamma g)|^{\nu + \varrho},
\end{equation}
for $\nu \in X_T$ , $g \in G(\bbA_F)$ and where $\varrho$ is half the sum of all positive roots. This series is absolutely convergent whenever $\re(\nu - \varrho)$ is in the interior of the dominant Weyl chamber \cite[Section II.1.5]{MW2}.

We denote by $\cP(X_T^G)$\index{$\cP(X_T^G)$} the space of Paley-Wiener functions on $X_T^G$. Recall that when $F$ is a number field, we identify $X_T^G\cong\fa_\bbC^{G*}$, so this is the space of holomorphic functions $\phi$ on $X_T^G$ satisfying the usual Paley-Wiener estimates. Given $\fR>0$, we let $\cP^\fR(X_T^G)\supset \cP(X_T^G)$\index{$\cP(X_T^G)$!$\cP^\fR(X_T^G)$} be the space of all functions $\phi$ which are holomorphic on $B_\fR = \{\lambda \in X_T^G\mid |\re\lambda|<\fR\}$ satisfying the similar estimate that, for all $n\in\bbN$ there is $C_n>0$ with
\begin{equation}\label{eq:PWRest}
|\phi(\lambda)| \leq C_n(1 + \|\lambda\|)^{-n}.
\end{equation}
\noindent One defines the pseudo-Eisenstein series\index{$\cE(\nu,\chi,g)$!$\theta_\phi$}, for $\phi \in \cP(X_T^G)^{\fR}$ (see \cite[Section II.1]{MW2}) via
\begin{equation}\label{eq:pseudo-Eisenstein}
\theta_\phi(g)=\int_{\operatorname{Re}(\nu)=
\nu_0}\phi(\nu) \cE(\nu,\chi,g)d\nu,
\end{equation}
where $\nu_0\in \re X_T^G$ is in the cone of convergence of the Eisenstein series.

\subsection{Inner product formula}

The starting point for our results is the inner product formula between pseudo-Eisenstein series \cite[II.2.1]{MW2}. In our context, the formula will be given by computing the action of the global intertwining operator on the global spherical vector. The computation (of the formula) in the generality considered in this section was done in \cite{H4}. In the computation, we give $K$ the Haar measure whose total measure is one. For the unipotent groups, the measure comes from the measure $\mu$\index{Measures!$\mu$} on $\bbA_F$ for which $\mu(\bbA_F/F) = 1$. For a finite place $v$ of $F$, the local measure is the self-dual one and is such that $\mu_v(\fo_v) = (N\fd_v)^{-1/2}$, where $N\fd_v$ denotes the norm of the different of $F_v$ (see \cite[Section 2.2]{Ta}, also for the measures in the Archimedean places). If, on the other hand, we consider the measure $\mu' = \prod_v\mu'_v$\index{Measures!$\mu'$} in which $\mu'_v(\fo_v) = 1$ for $v$ finite (this measure is the relevant one in the computations in \cite[Section 3]{Ca}, for example) we have $\mu' = \delta_F\mu$, in which
\begin{equation}\label{eq:delta}
\delta_F = |d_F|^{1/2}
\end{equation}
with $d_F$ the discriminant of $F$. 
We similarly define $\delta_{F'}$  for a finite extension $F'$ of $F$. In what follows, given $\eta$ an automorphic character of a split torus, we will write $\eta\sim \eta'$ if $\eta$ and $\eta'$ are unramified twists of each other.

In this case, $X_T^G$ is the complex vector space $\fa^{G*}_{\bbC}$.  Given any finite extension $F'\supseteq F$ and an automorphic character $\eta:C_{F'}\to \bbC^\times$, where $C_{F'}$ denotes the id\`ele class group of $F'$, let $L_{F'}(s,\eta)$\index{L-functions!$L_{F'}(s,\eta)$} be the product over all finite places of the local $L$-functions $L(s,\eta_v)$. For an everywhere unramified character $\eta$, let \index{L-functions!$\Lambda_{F'}(s,\eta)$}
\begin{equation}\label{eq:LambdaNF}
\Lambda_{F'}(s,\eta) := \delta_{F'}^s\Gamma_{F'}(s,\eta)L_{F'}(s,\eta),
\end{equation}
where $\Gamma_{F'}(s,\eta)$ is the product of all gamma-factors at the Archimedean places and $\delta_F$ is as in (\ref{eq:delta}).

\begin{prop}\label{prop:analyticprops}
The functions in (\ref{eq:LambdaNF}) satisfy the following properties:
\begin{enumerate}
\item[(a)] $\Lambda_{F'}(s,\eta)$ is entire, unless $\eta\sim 1$. For $\eta =1$, it has simple poles for $s\in\{0,1\}$.
\item[(b)] $\Lambda_{F'}(1-s,\eta^{-1})=\epsilon(\eta) \Lambda_{F'}(s,\eta)$, for an $\epsilon(\eta)\in\bbC$ such that $\epsilon(1) = 1$.
\item[(c)] $\Lambda_{F'}(s,\eta)$ has zeroes only for $0<\textup{Re}(s)<1$.
\item[(d)] $\Lambda_{F'}(s,\eta)$ is essentially bounded on vertical strips.
\item[(e)] $\Lambda_{F'}(s,\eta)^{-1}$ is of moderate growth on the right hyperplane $\textup{Re}(s)\geq 1$.
\end{enumerate}
\end{prop}

\begin{proof}
The analytic properties (a)-(d) are all essentially done in \cite{Ta} (see also \cite[Chapter VII, \textsection6 and \textsection7]{We} and \cite[Section 3.1]{Bu}). For item (e), see \cite{GL}. 
\end{proof}
The growth properties of items (d) and (e) are important to guarantee the convergence of the integrals we will be dealing with afterwards. We also define a regularised version of the $\Lambda$-functions above:\index{L-functions!$\Lambda^{\textup{reg}}_{F'}(s,\eta)$}
\begin{equation}\label{eq:regularisedLambda-NF}\index{$\Lambda^{\textup{reg}}_{F'}$}
\Lambda^{\textup{reg}}_{F'}(s,\eta):=\left\{
\begin{array}{ll}
-s(s-1)\Lambda_{F'}(s,\eta) & \textup{if }\eta = 1\\
\Lambda_{F'}(s,\eta) & \textup{if }\eta \not\sim 1.
\end{array}
\right.
\end{equation}
These are entire functions on $\bbC$ satisfying the properties (b)-(e) above.

The $F$-Borel $B$ determines a set ${R_+}$ of positive roots in the relative root system ${R = R(G,A)}$. Given a reduced $\alpha\in{R_+}$, we will denote by $G_\alpha$\index{$G$!$G_\alpha$} the corresponding rank one subgroup of $G$ and by $\tilde{G}_{\alpha,D}$\index{$G$!$\tilde{G}_{\alpha,D}$} the simply connected covering of its derived group.
As is well-known, either $\tilde{G}_{\alpha,D}$ is isomorphic to $\Res_{F_\alpha/F}SL_2$ or $\Res_{F_\alpha/F}SU(2,1)$, where $F_\alpha$\index{$\eta_\alpha$!$F_\alpha$} is a suitable finite and separable extension of $F$.
\begin{defn}\label{def:RootTypes}
A reduced root $\alpha$ will be said to be of $SL_2$- or $SU(2,1)$-type if the group $\tilde{G}_{\alpha,D}$ is isomorphic to, respectively, $\Res_{F_\alpha/F}SL_2$ or $\Res_{F_\alpha/F}SU(2,1)$.
\end{defn}
Now let $E_\alpha\subseteq E$\index{$\eta_\alpha$!$E_\alpha$} denote the minimal splitting field of $\tilde{G}_{\alpha,D}$. We have $F_\alpha = E_\alpha$ for $\alpha$ of $SL_2$-type, and $F_\alpha\subseteq E_\alpha$ is the degree two extension that defines $SU(2,1)$ over $F_\alpha$, in the other case. For our purposes, it will be convenient to consider fields and other objects related to every positive relative root, not just the reduced ones. 

\begin{defn}\label{def:autochars}
Given positive relative roots $\alpha\in R_+$, define the field $K_\alpha$ and the autormorphic character $\eta_\alpha$\index{$\eta_\alpha$} of $K_\alpha$\index{$\eta_\alpha$!$K_\alpha$} via
\begin{itemize}
\item For $\alpha$ reduced and of $SL_2$-type: let $K_\alpha := E_\alpha$ and $\eta_\alpha$ be the trivial automorphic character of $K_\alpha$.
\item For $\alpha$ reduced and of $SU(2,1)$-type: let $K_\alpha := E_\alpha$ and $\eta_\alpha$ be the trivial automorphic character of $K_\alpha$.
\item For $\beta = 2\alpha$ non-reduced (so $\alpha$ of $SU(2,1)$-type): let $K_\beta := F_\alpha$ and $\eta_\beta$ be the automorphic character $\eta_{E_\alpha/F_\alpha}$, corresponding to the quadratic extension $F_\alpha\subseteq E_\alpha$.
\end{itemize}
We extend the definitions of $K_\alpha$ and  $\eta_\alpha$ for all $\alpha\in R$ by setting $K_{-\alpha} = K_\alpha$ and $\eta_{-\alpha} = \eta_\alpha$, for every $\alpha\in R_+$. 
\end{defn}

Next, for any finite extension $F\subseteq F'$, let $\Gamma_{F'}$ denote the absolute Galois group of $F'$. For any $\alpha = \res(\tilde\alpha)\in R_+$ (and $\tilde\alpha\in\tilde{R}$ an absolute root) let $\Gamma_{K_\alpha/F}$ be a complete set of representatives of the quotient $\Gamma_F/\Gamma_{K_\alpha}$. Define the $F$-rational map \index{Root Systems!$\underline{\alpha}^\vee$} $\underline{\alpha}^\vee:\Res_{K_\alpha/F}\bbG_m\to T$ by: for all $(x_\sigma)_\sigma\in (\Res_{K_\alpha/F}\bbG_m)(F^{\textup{s}})$, where $F^{\textup{s}}$ denotes the separable closure of $F$,
\begin{equation}\label{eq:under_root}
\underline{\alpha}^\vee((x_\sigma)_\sigma):= \prod_{\sigma\in \Gamma_F/\Gamma_{K_\alpha}}({}^\sigma\tilde\alpha^\vee)(x_\sigma),
\end{equation}
where ${}^\sigma\tilde\alpha^\vee$ denotes the action of $\sigma\in\Gamma_F$ on the absolute coroot $\tilde\alpha^\vee$. For $v$ a place of $F$ we let $\underline{\alpha}_v^\vee$ be $\underline{\alpha}^\vee$ over the $F_v$-rational points, i.e., $\underline{\alpha}_v^\vee : (\Res_{K_\alpha/F}\bbG_m)(F_v)\to T(F_v)$. In particular, $\chi\circ\underline{\alpha}^\vee$ will stand for the automorphic character $\otimes_v(\chi\circ\underline{\alpha}_v^\vee)$.

With the definitions above in place, for any $\alpha\in R_+$ and $\lambda\in X_T = \fa^*_\bbC$, we define the ratio  $\sr_\alpha(\lambda,\chi)$\index{$r(\lambda,\chi)$!$\sr_\alpha(\lambda,\chi)$} by
\begin{equation}\label{eq:r-factorNF}
\mathsf{r}_\alpha(\lambda,\chi) := \displaystyle\frac{\Lambda_{K_\alpha}(\langle\lambda,\res(\tilde\alpha^\vee)\rangle,(\chi\circ\underline{\alpha}^\vee)\eta_\alpha)}{\epsilon((\chi\circ\underline{\alpha}^\vee)\eta_\alpha)\Lambda_{K_\alpha}(\langle\lambda,\res(\tilde\alpha^\vee)\rangle+ 1,(\chi\circ\underline{\alpha}^\vee)\eta_\alpha)}.
\end{equation}
Here, $\langle\cdot,\cdot\rangle$ is the complex bilinear pairing of $\fa^*_\bbC\times\fa_\bbC$. We see 
$\tilde\alpha^\vee\in \bbR\otimes X_*(T)$ as a root of $\dg= \Lie(\dG)$\index{$\dg$} with respect to $\dt=\Lie(\dT)$\index{$\dg$!$\dt$}, and thus a linear functional on $\dt$. Then, $\res(\tilde\alpha^\vee)\in\fa$ 
is its restriction to $\fa^*_\bbC\subseteq\dt$.
\begin{rem}
Suppose that $G = SU(2,1)$ and $F\subseteq E$ is the quadratic extension that defines $G$. Let $\alpha\in R_+$ be the reduced root and $\beta = 2\alpha$. In this case we note that $\underline{\alpha}^\vee = \underline{\beta}^\vee$ and the product $\sr_\alpha(\lambda,\chi)\sr_\beta(\lambda,\chi)$ reduces to
\begin{equation}\label{eq:SU(2,1)-case}
\displaystyle\frac{\Lambda_{E}(\frac{1}{4}\langle\lambda,\alpha^\vee\rangle,\chi\circ\underline{\alpha}^\vee)}{\epsilon(\chi\circ\underline{\alpha}^\vee)\Lambda_{E}(\frac{1}{4}\langle\lambda,\alpha^\vee\rangle+1,\chi\circ\underline{\alpha}^\vee)}
\displaystyle\frac{\Lambda_{F}(\frac{1}{2}\langle\lambda,\alpha^\vee\rangle,(\chi\circ\underline{\alpha}^\vee)\eta_{\beta})}{\epsilon((\chi\circ\underline{\alpha}^\vee)\eta_{\beta})\Lambda_{F}(\frac{1}{2}\langle\lambda,\alpha^\vee\rangle+1,(\chi\circ\underline{\alpha}^\vee)\eta_{\beta})},
\end{equation}
where, compared with (\ref{eq:r-factorNF}), we used here $\alpha^\vee$, the dual of the relative root $\alpha$, instead of $\res(\tilde\alpha^\vee)$.
\end{rem}
Given any $\lambda\in X_T$, denote by $I(\chi_\lambda)$ the representation of $G(\bbA_F)$ induced from the character $\chi_\lambda$ of $B(\bbA_F)$. For any $w\in W_G$, chose a representative $\tilde w\in G(F)$ and let $M(\lambda,\chi, w)$ denote the global intertwining operator
$$(M(\lambda,\chi,w)f)(g) = \int_{U(\bbA_F)/(U(\bbA_F)\cap \tilde wU(\bbA_F)\tilde w^{-1})}f(\tilde{w}^{-1}ug)du,$$
defined for $\re\lambda$ sufficiently positive, and mapping $I(\chi_\lambda)\to I(w\chi_\lambda)$. Let $(\cdot,\cdot)$ denote the Hermitian product on the space of automorphic forms. Given a root system $\Psi$ and $w$ an element in its Weyl group, we will write $\Psi(w) := \Psi_+\cap w^{-1}\Psi^-$. We have:

\begin{prop}[\cite{H4}]\label{prop:Shahidi}
If $1_K$ denotes the normalised spherical vector, we have
$$(1_K, M(\lambda,\chi,w)1_K) = \prod_{\alpha\in R(w)}\sr_{\alpha}(\lambda,\chi),$$
with $\sr_\alpha(\lambda,\chi)$ given as in (\ref{eq:r-factorNF}).
\end{prop}

\begin{proof}
This is the main result of \cite{H4}.
\end{proof}

Recall that $\cO(\chi) = \{\chi_\nu\mid\nu\in X_T^G\}$ denotes the orbit of unramified character twists of $\chi$ and that $W_{\cO(\chi)} = \{w\in W_G\mid w\chi\in\cO(\chi)\}$\index{Weyl Groups!$W_{\cO(\chi)}$}. We have (conf. \cite[II.2.1]{MW2}):

\begin{cor}\label{cor:NF.InProd}
The inner product between two pseudo-Eisenstein series can be written as
\begin{align}\label{eq:inprodpseueisNF}
(\theta_{\phi},\theta _{\psi})=&\int\limits_{\re(\lambda) = \lambda_0}
\sum_{{w}\in W_{\cO(\chi)}}\prod_{\alpha\in R(w)}\sr_{\alpha}(\lambda,\chi)\phi^{-}(-w\lambda)\psi (\lambda)d\lambda,
\end{align}
in which $\lambda_0\in \fa^{G*}$ is a fixed element in the cone of convergence of $\cE(\chi,\lambda,g)$\index{$\cE(\nu,\chi,g)$} and \index{$\phi^-$}
\begin{equation}\label{eq:phiminus}
\phi^{-}(\lambda):=\overline{\phi(\overline{\lambda})}.
\end{equation}
\end{cor}

\begin{rem}\label{rem:measures}
The measure $d\lambda$\index{Measures!$d\lambda$} used in (\ref{eq:pseudo-Eisenstein}) and (\ref{eq:inprodpseueisNF}) is defined as follows. Let $\cX := X^*(T)^\Gamma\cap \fa^{G*}\subseteq \fa^{G*}$\index{$\cX$} and let $\cY\subseteq \fa^G$\index{$\cX$!$\cY$} be the corresponding dual lattice. Note that $\cY\subseteq \fa^G$ is the the orthogonal projection of $X_*(T)^\Gamma$ onto $\fa^G$ and that $X_T^G = \bbC\otimes\cX = \fa_{\bbC}^{G*}$. Now let $\{y_1,\ldots,y_r\}$ be a basis of $\cY$. Then, the measure $d\lambda$ on subspace $\lambda_0+i\fa^{G*}$ of $\fa_{\bbC}^{G*}$ is given by
\begin{equation}\label{eq:measureV}
d\lambda := (-2\pi i)^{-r}dy_1\wedge\cdots\wedge dy_r.
\end{equation}
\end{rem} 

\subsection{Rewriting the scalar product}\label{sub:Rewriting} In this subsection, our aim is to rewrite the expressions for the scalar product  (\ref{eq:inprodpseueisNF})
in a way that we can use the Residue Calculus developed in \cite{HO1} and refined in \cite{O-Sp} and \cite{O-Supp}. We will also use the notion of Kac (co)root system of $\chi$, to be defined below, inspired by \cite{Re2}.

Recall that $\dg$\index{$\dg$} denotes the Lie algebra of $\dG$. Fix $\dt\subseteq\db\subseteq\dg$ a Cartan and a Borel subalgebras corresponding to $\dT$ and $\dB$\index{$G$!$\dB$}. We have a decomposition
\begin{equation}\label{eq:LieDecomp}
\dg  = \dn^-\oplus\dt\oplus\dn,
\end{equation}
with $\dn$\index{$\dg$!$\dn$} the nilpotent radical of $\db$\index{$\dg$!$\db$} and $\dn^-$\index{$\dg$!$\dn^-$} its opposite. 

Recall also that we denote by $\tilde{R}\subseteq X^*(T)$\index{Root Systems!$\tilde{R}$ absolute roots} the absolute root system and by $R\subseteq X^*(A)$\index{Root Systems!$R$ relative roots} the relative root system, which is the root system of the pair $(G,A)$.
We will denote by $\tilde{R}^\vee \subseteq  X_*(T)$\index{Root Systems!$\tilde{R}^\vee$ absolute coroots}, the root system of absolute coroots, which is viewed as the root system of the pair $(\dg,\dt)$. 
From the identification $\dt\cong\bbC\otimes X^*(T)$, we view $\fa^*_\bbC = \bbC\otimes X^*(T)^\Gamma$ as a subspace of $\dt$. 
We can therefore consider $\res(\tilde\alpha^\vee)\in\fa$, the restriction of $\tilde\alpha^\vee$ to $\fa_\bbC^*$. We let $\rR\subseteq\fa$\index{Root Systems!$\rR$ restricted coroots} be the possibly non-reduced root system of restricted coroots.
We note that this equals to the set of roots of the pair ($\dg,\fa^*_\bbC$).

\begin{rem}
We note that, in general, $\rR$ is not the dual root system of $R$, but we still have a bijective correspondence $\alpha\leftrightarrow\ra$
which goes as follows: if $\alpha\in R$, then $\ra:= \res(\tilde\alpha^\vee)$, where $\tilde\alpha\in\tilde{R}$ is an absolute root that restricts to $\alpha$ and $\tilde\alpha^\vee\in\tilde{R}^\vee$ is the (usual) dual coroot of $\tilde\alpha$. Under this correspondence, $\alpha$ reduced implies $\ra$ reduced, and vice-versa.
\end{rem}

\begin{defn}\label{defn:KacRts}
The Kac root system of $\chi$, denoted $\rR_\chi$\index{Root Systems!$\rR_\chi$ Kac coroots}, is defined as the set of all restricted coroots $\ra$ whose corresponding relative root $\alpha\in R$ satisfy
\begin{equation}\label{eq:Kac}
\chi\circ\underline\alpha^\vee = \eta_\alpha,
\end{equation}
where  $\eta_\alpha$ was defined in Definition \ref{def:autochars} and $\underline{\alpha}^\vee$ in (\ref{eq:under_root}). 
\end{defn}

\begin{prop}
The Kac root system $\rR_\chi\subseteq\rR$ can be identified with the root system of the pair $(\dHc,\dT^\Gamma)$, where $\dHc:=C_{{}^LG}(\varphi_\chi)^\circ$\index{$G$!$\dHc$} is the identity component of the centralizer in ${}^LG$ of the Langlands parameter of the unramified character $\chi$.
\end{prop}

\begin{proof}
Following \cite{Re2}, we define an equivalence relation on $\tilde{R}^\vee$ by saying that $\tilde\alpha^\vee$ is $\Gamma$-equivalent to $\tilde\beta^\vee$ if and only if $\res(\tilde\alpha^\vee)$ is positively proportional to $\res(\tilde\beta^\vee)$. Denote the set of equivalence classes by $\rR_\Gamma$. One obtains only two types of equivalence classes $a\in \rR_\Gamma$: if we choose an element $\tilde\beta_a^\vee\in a$ such that $\rb_a = \res(\tilde\beta_a^\vee)$ is reduced, then the corresponding relative root is either of $SL_2$- or of $SU(2,1)$-type (this is, respectively, the type I and type II equivalence classes in \cite[Section 3.3]{Re2}, adapted to our context). Proceeding in a similar fashion as was done in \cite[Section 3.4]{Re2}, one considers the twisted affine extension of the root system $\{\rb_a\mid a\in\rR_\Gamma\}$ and notes that the set of linear parts of all affine roots in this twisted affine system is $\rR$, the set of restricted coroots. Finally, using that $\chi\circ\underline\alpha^\vee$ and $\varphi_\chi\circ\underline{\alpha}^\vee$ correspond to each other by class field theory, one can show that the condition (\ref{eq:Kac}) defines a reduced root system $\rR_\chi\subseteq\rR$, with the claimed property (we refer also to the explicit computations of the root system of fixed-point subalgebras in \cite[Section 3.4]{Re2}). This finishes the proof.
\end{proof}

The group $W_{\cO(\chi)}\subseteq W_G$\index{Weyl Groups!$W_{\cO(\chi)}$} acts on $\rR_\chi$. We let ${}^rW_{\cO(\chi)}$\index{Weyl Groups!${}^rW_{\cO(\chi)}$} denote the subgroup of $W_{\cO(\chi)}$ generated by the reflections $\{s_\alpha\mid\ra\in\rR_\chi\}$. We let also $W_{\cO(\chi)}^+$\index{Weyl Groups!$W_{\cO(\chi)}^+$} denote the subgroup of $W_{\cO(\chi)}$ that stabilizes $V_+$\index{$V$!$V_+$}, the closure of the dominant chamber defined with respect to $\rR_{\chi,+}:=\rR_+\cap\rR_\chi$\index{Root Systems!$\rR_{\chi,+}$} of $\fa^{G*}$. Elements $w\in W_{\cO(\chi)}^+$ are such that $w(\rR_{\chi,+})=\rR_{\chi,+}$. 

\begin{notation}
To ease the notation, we shall omit the subscript $\cO(\chi)$ for the several finite groups related to the Weyl group and write simply $W=W_{\cO(\chi)}$\index{Weyl Groups!$W$}, $W^+=W_{\cO(\chi)}^+$\index{Weyl Groups!$W^+$} and ${}^rW = {}^rW_{\cO(\chi)}$\index{Weyl Groups!${}^rW$}.
\end{notation}

\begin{defn}\label{def:origin}
We say that an element $\chi_0\in\cO(\chi)$ is an origin for $\cO(\chi)$, if $w\chi_0=\chi_0$ for all $w\in W$.
\end{defn}

\begin{prop}\label{prop:originNF}
If $F$ is a number a field, then there exists an origin $\chi_0$ in $\cO(\chi)$. 
\end{prop}

\begin{proof}
Choose $\chi$ in $\cO(\chi)$. Note that $\cO(\chi)$ is an $X_T^G$-torsor with compatible $W$-action, that is, for all $w\in W$ we have $w\chi=\chi\otimes\nu_w$ for a unique $\nu_w$ in $X_T^G$. Hence, the existence of an origin is equivalent to the vanishing $H^1(W,X_T^G)=0$, but since $X_T^G \cong \fa^{G*}_\bbC$ we have that $X_T^G$ is uniquely divisible, which implies the claim. 
\end{proof}

\begin{rem}
Since $\cO(\chi)$ has an origin as we are working over a number field, from now on, we can and will assume that the character $\chi$ is an origin for $\cO(\chi)$. Hence, whenever $\alpha\in R$ is such that $\chi\circ\underline{\alpha}^\vee \sim \eta_\alpha$ (recall that this means that the quotient between these characters is unramified), then $\chi\circ\underline{\alpha}^\vee = \eta_\alpha$, that is $\ra \in \rR_\chi$. That said, because of the properties of the $\Lambda$-functions (see Proposition \ref{prop:analyticprops}(a)), it follows that the numerator of the ratio-factor $\sr_\alpha(\lambda,\chi)$, defined in (\ref{eq:r-factorNF}), will have poles exactly when $\ra\in \rR_\chi$. Moreover, in the case in which both $\alpha$ and $2\alpha$ are in $R_+$, only one among $\sr_\alpha(\lambda,\chi)$ and $\sr_{2\alpha}(\lambda,\chi)$ will have a singular numerator, as one can see from (\ref{eq:SU(2,1)-case}).
\end{rem}

\begin{prop}\label{prop:V.2}
We have $W = W^+\rtimes {}^rW$. Furthermore,
The group ${}^rW$ is the Weyl group of $\rR_\chi$ and it is such that $w\chi = \chi$. The set $\rR_{\chi,+}$ is a set of positive roots of $\rR_\chi$.
\end{prop}

\begin{proof}
The necessary arguments are similar to the ones given in \cite[Propositions 1.3, 1.12]{H3} and \cite[Proposition 4.2]{H1} (see also \cite[Proposition 5.4]{H2} and \cite{Si}).
\end{proof}

\subsubsection{The number field case} We now start to rewrite Corollary \ref{cor:NF.InProd} in a more convenient way.
\begin{defn}\label{defn:Hecke-cNF}
Define the local $c$-function by \index{$c(\lambda)$}
\begin{equation}\label{eq:c-function}
c(\lambda):=\prod_{\ra\in\rR_{\chi,+}}\frac{\langle\lambda,\ra\rangle + 1}{\langle\lambda,\ra\rangle}.
\end{equation}
for any $\lambda$ in $V_\bbC$.
\end{defn}

Note that the rational function $c$ on $\fa^{G*}_\bbC$ is in fact the $c$-function of the graded Hecke algebra with (infinitesimal) Hecke parameter $k_{\ra}=1$, for all $\ra$ (see \cite[(1.8)]{HO1}). Note also that $c$ is invariant with respect to elements $w \in W_{\cO(\chi)}^+$.

\begin{defn}\label{defn:r-funcNF}
For $s\in\bbC$ and $\alpha\in{R}$, let $\rho_\alpha(s,\chi):= \Lambda^{\textup{reg}}_{K_\alpha}(s,\eta_\alpha(\chi\circ\underline{\alpha}^\vee))$\index{L-functions!$\rho_\alpha(s,\chi)$}, where the regularised version of the $\Lambda$-function was defined in (\ref{eq:regularisedLambda-NF}). For $\lambda\in \fa^{G*}_\bbC$, define the $r$-function by \index{$r(\lambda,\chi)$}
\begin{equation}\label{eq:r-functionNF}
r(\lambda,\chi):=\prod_{\alpha\in{R}_+}\rho_\alpha(\langle\lambda,\ra\rangle,\chi).
\end{equation}
\end{defn}

\begin{lem}\label{lem:EasyComp}
Let $\varphi$ be a function of one complex variable, and define
a function $f$ on $\fa^{G*}_\bbC$ by
$f(\lambda)=\prod_{\beta\in\Psi^+}\varphi(\langle\lambda,\beta\rangle)$, where $\Psi\subseteq \fa^{G}$ is any root system. For any $w$ in the Weyl group of $\Psi$ we have
\begin{equation*}
\frac{f(w\lambda)}{f(\lambda)}=\prod_{\beta\in\Psi(w)}\frac{\varphi(-\langle\lambda,\beta\rangle)}{\varphi(\langle\lambda,\beta\rangle)}.
\end{equation*}
\end{lem}

\begin{proof}
The case when $w$ is simple reflection is straightforward. The result follows by induction on the length of $w$.
\end{proof}

\begin{cor}\label{cor:Easy}
For any $w\in W_{\cO(\chi)}$ and $\lambda \in \fa_\bbC^{G*}$, we have
\begin{equation}\label{eq:r-NF}
\frac{r(\lambda,\chi)}{r(w\lambda,w\chi)}=
\prod_{\alpha\in R(w)}
\frac{\Lambda^{\textup{reg}}_{K_\alpha}(\langle\lambda,\hat{\alpha}\rangle,\eta_\alpha(\chi\circ\underline{\alpha}^\vee))}{\epsilon(\eta_\alpha(\chi\circ\underline{\alpha}^\vee))\Lambda^{\textup{reg}}_{K_\alpha}(\langle\lambda,\hat{\alpha}\rangle+1,\eta_\alpha(\chi\circ\underline{\alpha}^\vee))}.
\end{equation}
and
\begin{equation}\label{eq:zetaNF}
\displaystyle\frac{c(-w\lambda)}{c(-\lambda)}
\displaystyle\frac{r(\lambda,\chi)}{r(w\lambda,w\chi)}=\displaystyle\prod_{\alpha\in R(w)}
\sr_\alpha(\langle\lambda,\ra\rangle,\chi)
\end{equation}
\end{cor}

\begin{proof}
It follows from Lemma \ref{lem:EasyComp} that, for any $w\in W_{\cO(\chi)}$, we have
\begin{equation*}
\frac{r(w\lambda,w\chi)}{r(\lambda,\chi)}=\prod_{\alpha\in{R}(w)}\frac{\rho_{-\alpha}(-\langle\lambda,\ra\rangle,\chi)}{\rho_\alpha(\langle\lambda,\ra\rangle,\chi)},
\end{equation*}
where $\rho_\alpha(s,\chi)$ was defined in Definition \ref{defn:r-funcNF}. The first claim follows by applying the functional equation. Equation (\ref{eq:zetaNF}) follows from (\ref{eq:r-NF}), if we use Lemma \ref{lem:EasyComp} to write the the regularized factors of the the $\Lambda$-function.
\end{proof}

It follows from Corollary \ref{cor:Easy} that the meromorphic function on $\fa^{G*}_\bbC$ defined by (\ref{eq:r-NF}) satisfy the following properties:
\begin{enumerate}[(a)]
\item its pole set is a union of hyperplanes of the form
$\langle\lambda,\hat{\alpha}\rangle=s$, with $-1<\textup{Re}(s)<0$ and
$\alpha\in R(w)$,
\item it is holomorphic and of moderate growth on the union of the right hyperplanes $\langle\lambda,\ra\rangle\geq 0$, with $\alpha\in R(w)$.
\end{enumerate}

The last property follows from properties (d) and (e) recalled in the Proposition \ref{prop:analyticprops}. Inserting (\ref{eq:zetaNF}) in (\ref{eq:inprodpseueisNF})  we obtain that
\begin{equation}\label{eq:inner1}
(\theta_\phi,\theta_\psi)=\int\limits_{\lambda\in \lambda_0 + i\fa^{G*}}
\Sigma_W(\phi^-)(\lambda)\psi(\lambda)
\frac{d\lambda}{c(-\lambda)},
\end{equation}
in which the meromorphic function $\Sigma_W(\phi^-)$\index{$\Sigma_W(\phi^-)$} is given by
\begin{equation}\label{eq:R-NF}
\Sigma_W(\phi^-)(\lambda):=\sum_{w\in W_{\cO(\chi)}}
c(-w\lambda)\phi^-(-w\lambda)\frac{r(\lambda,\chi)}{r(w\lambda,w\chi)}.
\end{equation}
Write $V := \fa^{G*}$\index{$V$} and $V_\bbC$ for its complexification. For any point $p_V\in V$ outside the set of poles of $c(-\lambda)^{-1}$ we define a linear functional $X_{V,p_V}$ on $\cP(V_\bbC)$ by (cf. \cite[equation(3.8)]{HO1}):\index{$X_{V,\lambda_0}$}
\[
X_{V,\lambda_0}(\phi) = \int\limits_{\lambda\in p_V + iV}\phi(\lambda)\frac{d\lambda}{c(-\lambda)},
\]
for all $\phi\in\cP(V_\bbC)$. Hence, setting $p_V=\lambda_0$, we rewrite the expression (\ref{eq:inner1}) as
\begin{equation}\label{eq:inner2}
(\theta_\phi,\theta_\psi)=X_{V,\lambda_0}(\psi\Sigma_W(\phi^-)).
\end{equation}

\section{Normalized Eisenstein series and a lower bound to the spectrum}\label{s:NormEisen}

Equation (\ref{eq:inner2}) would be written in a way to use the Residual Calculus mentioned in the previous section if it were not for the possible singularities coming from zeroes of the $L$-functions in the denominator of (\ref{eq:R-NF}). A first step into overcoming this difficulty is to introduce a normalization to the Eisenstein and pseudo-Eisenstein series.

\subsection{Normalized Eisenstein series}
The Eisenstein series $\cE(\nu,\chi,g)$\index{$\cE(\nu,\chi,g)$} was recalled in (\ref{eq:Eiseinstein}). It is known that this series has meromorphic continuation for all $\nu \in X_T^G$ and satisfies the functional equations
\begin{equation}\label{eq:FEq}
\cE(w\nu,w\chi,g) = \frac{c(w\nu)}{c(\nu)}\frac{r(w\nu,w\chi)}{r(\nu,\chi)}\cE(\nu,\chi,g)
\end{equation}
for all $w\in W$.

\begin{defn}\label{def:normEisen}
For $g\in G(\bbA_F)$ and $\nu \in X_T^G$, we define the normalised Eisenstein series \index{$\cE(\nu,\chi,g)$!$\cE_0(\nu,\chi,g)$}by
\begin{equation}\label{eq:normEisen}
\cE_0(\nu,\chi,g) = \frac{c(-\nu)r(-\nu,\overline\chi)}{|W|}\cE(\nu,\chi,g).
\end{equation}
\end{defn}

\begin{lem}\label{lem:NormEis}
The normalised Eisenstein $\cE_0(\nu,\chi,g)$ is, as a function of $\nu$, holomorphic and $W$-invariant.
\end{lem}
\begin{proof}
Using  (\ref{eq:FEq}) and that $r(\nu,\chi)r(-\nu,\overline{\chi})$ is $W$-invariant,
it follows that $\cE_0(\nu,\chi,g)$ is $W$-invariant.
From (\ref{eq:FEq}), we have that $\cE(\nu,\chi,g)$ has zeroes along the hyperplanes $\langle\hat{\alpha},\lambda\rangle = 0$ for $\alpha\in R$ simple. These zeroes cancel the poles of $c(-\nu)$, and hence it follows that $\cE_0(\nu,\chi,g)$ is holomorphic in the closure of the fundamental chamber. Invoking the $W$-invariance, it is holomorphic everywhere.
\end{proof}

\subsection{Lower bound to the spectrum}
In this subsection, we will use the notations \index{$r(\lambda,\chi)$!$r(\lambda)$}\index{$r(\lambda,\chi)$!$\check{r}(\lambda)$}
\begin{equation}\label{eq:abbrev}
r(\lambda) := r(\lambda,\chi),\qquad\check{r}(\lambda) := r(-\lambda,\overline{\chi}).
\end{equation}

\begin{defn}\label{def:normPseudoEisen}
Given $\phi\in \cP(X_T^G)$, define, for all $g\in G(\bbA_F)$, the normalized pseudo-Eisenstein series via \index{$\cE(\nu,\chi,g)$!$\theta_{\phi,0}$}
\begin{equation}
\theta_{\phi,0}(g) = X_{V,\lambda_0}(\phi\cE_0(\cdot,\chi,g)).
\end{equation}
\end{defn}

\begin{prop}
For any $\phi\in \cP(X_T^G)$ the normalized pseudo-Eisenstein series $\theta_{\phi,0}$ is square-integrable. 
\end{prop}

\begin{proof}
Given $g\in G(\bbA_F)$ and $\phi\in \cP(X_T^G)$, note that
\begin{align*}
\theta_{\phi,0}(g) &= X_{V,\lambda_0}(\phi\cE_0(\cdot,\chi,g))\\
&=\int_{\re(\lambda)=\lambda_0} \phi(\lambda)\cE_0(\lambda,\chi,g)\frac{d\lambda}{c(-\lambda)}\\
&=\frac{1}{|W|} \int_{\re(\lambda)=\lambda_0} r(-\lambda,\overline{\chi})\phi(\lambda)\cE(\lambda,\chi,g)d\lambda,
\end{align*}
that is, $\theta_{\phi,0} = |W|^{-1}\theta_{\check{r}\phi}$, with $\check{r}$ as in (\ref{eq:abbrev}). From the known growth conditions of the $r$-function on vertical strips (see Proposition \ref{prop:analyticprops} and Definition \ref{defn:r-funcNF}), it follows that the function $\check{r}\phi$ is in $\cP^\fR(X_T^G)$ for any $\fR > 0$ and the result follows from \cite[Proposition II.1.2]{MW2}.
\end{proof}
Hence, we are justified to define $\Ltwoxi_{[T,\cO(\chi)],0}^K$ as the subspace of $\Ltwoxi_{[T,\cO(\chi)]}^K$ topologically generated by all the  normalized pseudo-Eisenstein series. Since $\theta_{\phi,0} = |W|^{-1}\theta_{\check{r}\phi}$ for any Paley-Wiener function $\phi$, in view of (\ref{eq:inner2}) we have
\begin{equation}\label{eq:innernorm}
|W|^2(\theta_{\phi,0},\theta_{\psi,0})= X_{V,\lambda_0}((\check{r}\psi)\Sigma_W((\check{r}\phi)^-)).
\end{equation}
Since $r(\lambda,\chi)r(-\lambda,\overline\chi)$ is $W$-invariant and $\check{r}^-(\lambda) = r(-\lambda,\chi)$, from (\ref{eq:R-NF}) we have that
\begin{equation}\label{eq:Rrfunc}
\Sigma_W((\check{r}\phi)^-)(\lambda) = |W|r(\lambda,\chi)A_0(\phi^-)(-\lambda).
\end{equation}
Hence, unlike $(\theta_{\phi},\theta_{\psi})$, the expression for the inner product between normalized pseudo-Eisenstein series has no potential poles corresponding to criticial zeroes of $L$-functions and this expression can be treated directly with Residue Calculus. We now recall the necessary notions and write them in our context.

\subsection{Residue Distributions}
The main references to this part are \cite{HO1}, \cite{O-Sp} and \cite{O-Supp}. Recall that $V=\fa^{G*}$\index{$V$} and $\dim V = r$. Given an affine subspace $L\subseteq V$\index{$\cL$ Residual spaces!$L$}, write $L = c_L +V^L$, where $V^L \subseteq V$\index{$\cL$ Residual spaces!$V^L$} is a (linear) subspace, and $c_L$\index{$\cL$ Residual spaces!$c_L$} is the element of $L$ which is of minimum distance to the origin. In other words, if $V_L$\index{$\cL$ Residual spaces!$V_L$} denotes the orthogonal complement of $V^L$, then $\{c_L\} = V_L \cap L$. We call $c_L$ the center of $L$, and we write $L_\bbC = c_L + V_\bbC^L \subseteq V_\bbC$ for its complexification. We define the tempered form of $L$ to be $L^{\textup{temp}}:=c_L+iV^L \subseteq V_\bbC$\index{$\cL$ Residual spaces!$L^{\textup{temp}}$}. 

The singularities of the $(r,0)$-form $\omega(\lambda) = d\lambda/c(-\lambda)$\index{$\omega$} of the linear functional $X_{V,\lambda_0}$ define a finite collection of affine hyperplanes and we denote by $\cL^\omega$ the lattice of intersection of such hyperplanes. Let also $\cC^\omega$ denote the set of centers of all the elements of $\cL^\omega$. Using the Residue Lemma \cite[Lemma 3.1]{HO1}, it follows that there is a unique collection of tempered distributions $\{X_c | c\in \cC^\omega\}$\index{$X_{V,\lambda_0}$!$X_c$} such that
\begin{itemize}
    \item[(a)] $\supp(X_c)\subseteq \cup \{c + iV^L \mid L\in\cL^\omega\textup{ with }c_L =c\}$
    \item[(b)] $X_{V,\lambda_0} (f) = \sum_{c\in \cC^\omega} X_c (f|_{c + iv^L})$, for all $f \in \cP(V_\bbC)$.
\end{itemize}
Elements in the collection $\{X_c | c\in \cC^\omega\}$ are called local contributions and among them, many are identically zero. In order to obtain a more precise description of the support of $X_{V,\lambda_0}$, it is convenient to consider the symmetrized expression
\[
\frac{1}{|W|}\sum_{w\in W}\frac{1}{c(w\lambda)} = \frac{1}{c(-\lambda)c(\lambda)}
\]
and the linear functional $Y_{V,\lambda_0} \in \cP(V_\bbC)^*$\index{$X_{V,\lambda_0}$!$Y_{V,\lambda_0}$}
\[
Y_{V,\lambda_0}(\phi) = \int\limits_{\lambda\in \lambda_0 + iV}\phi(\lambda)\frac{d\lambda}{c(-\lambda)c(\lambda)},
\]
defined with respect to the $(r,0)$-form of integration $\Omega_r(\lambda) = d\lambda/c(-\lambda)c(\lambda)$\index{$\omega$!$\Omega_r$}. Note that both linear functionals $X_{V,\lambda_0}$ and $Y_{V,\lambda_0}$ can be extended to the spaces $\cP^\fR(V_\bbC)$ for $\fR>0$.

\begin{defn}
Given an affine space $L\subseteq V$ let $\hat{P}(L) = \{\ra\in\rR_\chi\mid\langle\ra,L\rangle = 1\}$ and $\hat{Z}(L) = \{\ra\in\rR_\chi\mid\langle\ra,L\rangle = 0\}$. An affine subspace $L\subseteq V$ is said to be residual if $L = \cap_{\ra\in\hat{P}(L)}\{\lambda\in V\mid \langle\lambda,\ra\rangle = 1\}$ and
\[
|\hat{P}(L)| - |\hat{Z}(L)| - \codim(L) \geq 0.
\]
We shall denote by $\cL$\index{$\cL$ Residual spaces} the collection of residual affine subspaces of $V$ and $\cC$\index{$\cL$ Residual spaces!$\cC$} shall denote the set of centers of elements in $\cL$.
\end{defn}
The next result summarizes the relevance of the residual spaces in decomposing the $Y$-functional. To state this result, a few extra notations are in order. First, recall that $V_+$ denote the closure of the dominant chamber of $V$ with respect to $\rR_{\chi,+}$. Given any $L\in \cL$, let $\rR_{\chi,L}$\index{Root Systems!$\rR_{\chi,L}$} denote the parabolic root subsystem of $\rR_{\chi}$ consisting of all roots that are constant along $L$. From the decomposition $V = V_L\oplus V^L$, we let $\lambda_L$ denote the projection of the base point $\lambda_0$ onto $V_L$ and we let $Y_{V_L,\lambda_L}$\index{$X_{V,\lambda_0}$!$Y_{V_L,\lambda_L}$} denote the corresponding lower rank integral defined on $\cP(V_{L,\bbC})$. Furthermore, if $r^L := \dim V^L$ we let $\{y_1,\ldots,y_{r^L}\}$ be a basis of the orthogonal projection of the lattice $\cY\cap(V_L)^{\perp}$ onto $V^*$ (recall the lattice $\cY$\index{$\cX$!$\cY$} was described in Remark \ref{rem:measures}). 

\begin{thm}\label{thm:Ydecomp}
The following statements hold true:
\begin{itemize}
\item[(1)] There is a unique collection $\{Y_c\mid c\in\cC\}$\index{$X_{V,\lambda_0}$!$Y_c$} of tempered distributions such that,
for any $\phi\in \cP(V_\bbC)$ we have
\[
Y_{V,\lambda_0} (\phi) = \sum_{c\in \cC} Y_c (\phi|_{c + iV^L})
\]
and for all $c\in \cC$, $\supp(Y_c)\subseteq \cup_{\{L\in\cL\mid c_L=c\}} L^{\textup{temp}}$.
\item[(2)] An affine subspace $L$ is residual if and only if $|\hat{P}(L)| - |\hat{Z}(L)| - \codim(L) = 0$.
\item[(3)] For $c\in\cC\cap V_+$ the local contribution $Y_c$ admits a decomposition
\begin{equation}\label{eq:locY}
Y_c(\phi|_{c + iV})=\sum_{\{L\in \cL\;|\;c_L=c\}}Y_{L}({\phi}|_{c + iV}),
\end{equation}
where the functional $Y_{L}$\index{Measures!$Y_L$} is a positive measure on $c_L+iV$ with support on $c_L+iV^L$. 
More precisely, 
\begin{equation*}\label{eq:YL}
Y_{L}({\psi}) := Y_{V_L,c_L}(\{c_L\})\int_{c_L+iV^L}{\psi}(\lambda^L)d\mu^L(\lambda^L),
\end{equation*}
for any test function $\psi$ on $c_L+iV$, in which, $Y_{V_L,c_L}(\{c_L\})$ denotes the total mass of the local contribution $Y_{V_L,c_L}$ of the lower rank integral $Y_{V_L,\lambda_L}$ and for $\lambda^L\in c_L+iV^L$,\index{Measures!$d\mu^L$}
\begin{equation}\label{eq:muL}
d\mu^L(\lambda^L):=\prod_{\ra\in{\rR_{\chi,+}}\backslash\rR_{\chi,L}}
\frac{\langle\ra,c_L\rangle^2+\langle\ra,\im\lambda^L\rangle^2}{(\langle\ra,c_L\rangle-1)^2+\langle\ra,\im\lambda^L\rangle^2}
d\lambda^L,
\end{equation}
with $d\lambda^L$\index{Measures!$d\lambda^L$} a measure on $iV^L$ given by
\begin{equation}
    d\lambda^L = \frac{dy_1\wedge \cdots \wedge dy_{r^L}}{(-2\pi i)^{r^L}}.
\end{equation}
\end{itemize}
\end{thm}

\begin{proof}
Application of the Residue Lemma \cite[Lemma 3.1]{HO1} to $Y_{V,\lambda_0}$ would, a priori, yield a decomposition with respect to the set $\cC^{\Omega_r}$ of centers of affine spaces in the lattice of intersection $\cL^{\Omega_r}$, defined with respect to the singular locus of the form $\Omega_r = d\lambda/c(-\lambda)c(\lambda)$\index{$\omega$!$\Omega_r$}, but from \cite[Theorem 3.13]{HO1} items (1) and (3) follow. Item (2) is \cite[Theorem 3.9]{HO1} (see also \cite[Theorem 1.1]{O-Supp} for a conceptual argument).
\end{proof}

Recall that two nilpotent orbits $\bo,\bo'$ of $\dHc=C_{\dG}(\varphi_\chi)^\circ$\index{$G$!$\dHc$} are said to be equivalent if they are conjugate under the adjoint action of $C_{\dG}(\varphi_\chi)$. This is the case if and only if their corresponding weighted Dynkin digrams are in the same $W$-orbit.

\begin{thm}\label{thm:BalaCarter}
There is a canonical bijection between the set $\{Wc\mid c\in\cC\}$ of $W$-orbits of centers of residual spaces and the set $\{\bo\}$ of equivalence classes of nilpotent orbits of $\Lie(\dHc)$.
\end{thm}

\begin{proof}
Given a nilpotent orbit $\bo$ of $\Lie(\dHc)$, let $2\lambda(\bo)$ denote its weighted Dynkin diagram (WDD) as in the Bala-Carter classification of nilpotent orbits \cite{BC,C}.
There is a canonical bijection between the sets
\[
\left\{\begin{array}{c}\textup{nilpotent orbits } \bo\\ \textup{of }\Lie(\dHc)\end{array}\right\}\leftrightarrow
\left\{\begin{array}{c}\textup{WDD }2\lambda(\bo)\textup{ of nilpotent }\\ 
\textup{orbits of }\Lie(\dHc) \end{array}\right\}\leftrightarrow
\left\{\begin{array}{c}{}^rW\textup{-orbits }{}^rWc\\ \textup{of centers }c\in\cC\end{array}\right\}.
\]
The first bijection is the celebrated result of Bala and Carter while the second bijection is \cite[Introduction, Remark 7.3, Remark 7.5]{O-Sp}, and realized by letting $\lambda(\bo) = c_+$ be the unique dominant element in the ${}^rW$-orbit of $c$. Taking into consideration the action of the complement $W^+$ to ${}^rW$ in $W$ yields the result.
\end{proof}

\begin{prop}
With notations as in Theorem \ref{thm:Ydecomp}, let $c \in \cC\cap V_+$ be dominant and $L\in \cL$ such that $c_L=c$. Then, $Y_{V_L,c_L}(\{c_L\}) > 0$.
\end{prop}

\begin{proof}
In this proof, we shall need information on the representation theory of Hecke algebras. 
The triple $(\dHc,\dB\cap\dHc,\dT^\Gamma)$\index{$G$!$\dB$} determine a based root datum $\cR$ and let $q_0>1$ be the residue field of some completion of $F$ at a non-archimedean place. Denote by $\cH=\cH(\cR,q_0)$ the affine Hecke algebra associated to the pair $(\cR,q_0)$, as in, for example, \cite{O-Sp} and let $\bbH(\cR,1)$ denote the associated graded affine Hecke algebra \cite{Lu} with equal parameter $1$. The Hecke algebra $\cH(\cR,q_0)$ is generated by elements $T_{s}$ labeled by simple reflections $s$ associated to the affine Weyl group of the root system $\cR$. There is an isomorphism $\iota:\cH(\cR,q_0^{-1})\to \cH(\cR,q_0)$ that maps $\iota:T_s \mapsto -q_0^{-1}T_s$ (see \cite[(3.8)]{HO2}), which induces an isomorphism $\bbH(\cR,-1)\to \bbH(\cR,1)$. Furthermore, the tuple $(L,\rR_{L,\chi},\rR_{L,\chi,+})$ naturally determines a parabolic based root datum $\cR_L$ (see \cite[Section 2.2]{O-Sp}). We let ${}^rW_L$ denote the Weyl group of $\rR_{L,\chi}$. 

Now, from the classification of discrete series representations for graded affine Hecke algebras with equal parameters (see \cite{KL,Lu}), we know that
the unique spherical representation with central character $({}^rW_L)c_L$ of the lower rank graded affine Hecke algebra $\bbH_L(\cR_L,-1)$ is a discrete series representation. Furthermore, the unique anti-spherical representation with central character $({}^rW_L)c_L$ is known to be the generic element in the corresponding $L$-packet. Applying the isomorphism $\bbH(\cR,-1)\to \bbH(\cR,1)$, the result follows.
\end{proof}

We finish this section by giving precise algebraic relations between the local contributions of the $X$- and $Y$-distributions.

\begin{defn}\label{def:Averagings}
Given any element $c\in V$, let $W_c$ and ${}^rW_c$ denote the isotropy subgroup of $c$ inside, respectively, $W$ and ${}^rW$. For any function $f$ on $V_\bbC$ define the averaging operator $A_c$ \index{$A_c$} by
\begin{equation}\label{eq:Averagings}
    A_c(f)(\lambda) = \frac{1}{|W_c|}\sum_{w\in W_c}c(w\lambda)f(w\lambda).
\end{equation}
We similarly define $\rA_c(f)$ by summing over ${}^rW_c$ instead of $W_c$.
\end{defn}

\begin{rem}
When $c=0$ is the origin of $V$, $A_0$ is the averaging over the whole Weyl group $W$. Note also that the normalized Eisenstein series satisfies
\[
\cE_0(\lambda,\chi,g) = A_0(r(\cdot,\overline{\chi})\cE(-\cdot,\chi,g))(-\lambda),
\]
for all $g\in G(\bbA)$, so this equation is another way to seeing the $W$-invariance of the normalized Eisenstein series. It also explains the factor $|W|^{-1}$ in (\ref{eq:normEisen}).
\end{rem}

\begin{prop}\label{prop:vanishA}
Let $c\in\cC\cap V_+$. Then for all $w\in  W$ we have:
\begin{equation}\label{eq:tXY}
X_{wc}=Y_{c}\circ w^{*}\circ {A_{wc}}.
\end{equation}
\end{prop}

\begin{proof}
When considering $\rA_{wc}$ instead of $A_{wc}$, this is \cite[Proposition 3.6]{HO1}. For this slightly more general situation, the geometric proof given in the above mentioned proposition of \cite{HO1} still applies.
\end{proof}

\subsection{Symmetrization of the expression for the inner product}
For each $L\in \cL$, we let $\nu_L$\index{Measures!$\nu_L$} be the unique positive measure supported on the tempered form
$L^{\textup{temp}}=c_L+iV^L\subseteq V_\bbC$ characterised by the requirements that, for
all ${f}\in \cP(V_\bbC)$, if $c_L\in V_+$ is dominant, we have
\begin{equation}\label{eq:pushfwd}
\int_{L^{\textup{temp}}}{f}(\lambda)d\nu_L=Y_L(f|_{c_L+iV})
\end{equation}
and \index{Measures!$\nu_{WL}$}
\begin{equation}\label{eq:nuLdef}
\nu_{WL}:=\sum_{L'\subseteq WL} \nu_{L'}
\end{equation}
is a $W$-invariant measure. Moreover, the measure  $\nu_L$ is the push forward of a smooth measure on $c_L+iV^L$
and is of the following form (see \cite[Definition 3.17]{HO1} and Theorem \ref{thm:Ydecomp}(2)): let ${}^rW_L\subseteq {}^rW$ be the Weyl group of the parabolic root subsystem $\rR_{\chi,L}$ and let $|A_{\bo_{{}^rWL}}|$ be the cardinality of the component group of the centralizer of the image of the $\mathfrak{sl}_2$-homomorphism associated to ${}^rWL$ (see Theorem \ref{thm:BalaCarter}). Then, 
for $\lambda = c_L + i\lambda^L\in L^{\textup{temp}}$,
\begin{equation}\label{eq:nuL}
d\nu_L(\lambda)=\frac{|({}^rW_L)_{c_L}|}{|A_{\bo_{{}^rWL}}|}\frac{\prod'_{\ra\in\rR_{\chi,L}}|\langle\ra,c_L\rangle|}{\prod'_{\ra\in\rR_{\chi,L}}|(\langle\ra,c_L\rangle + 1)|}\prod_{ \rR_{\chi,+}\backslash\rR_{\chi,L}}
\frac{\langle\ra,c_L\rangle^2+\langle\ra,\lambda^L\rangle^2}{(\langle\ra,c_L\rangle-1)^2+\langle\ra,\lambda^L\rangle^2}
d\lambda^L,
\end{equation}
where $\prod'$ denotes the product over the nonzero factors. 

\begin{rem}
We remark that the
precise constants in (\ref{eq:nuL}) were not yet available in \cite{HO1}, but these can be derived by a limit procedure
from the explicit formal degree formulas for discrete series characters of Iwahori-Hecke algebras \cite[(0.3)]{Re1} and \cite[Section 4.6]{O-STM} (see also \cite{CKK,CO}). 
These explicit formulas constitute a special case of the proof of the
conjecture \cite[Section 3.4]{HII} for unipotent representations of semisimple groups of adjoint type over 
a non-Archimedean local field.
The limit procedure and the computation of the relevant constants in the present case of graded Hecke
algebras will appear elsewhere \cite{DMO}.
\end{rem}

\begin{lem}\label{lem:easyRewriting}
Let $f\in \cP(X_T^G)$.
We can rewrite the action of the functional $X_{V,\lambda_0}$ as
\[X_{V,\lambda_0}(f) = \sum_{L\in\cL} \int_{L^{\textup{temp}}}{A_0}(f)(\lambda)d\nu_L.\]
\end{lem}

\begin{proof}
For this Lemma, we will use (\ref{eq:tXY}) and a computation similar to \cite[Theorem 3.18]{HO1}, which we reproduce here for convenience (and because we use the slightly different operators ${A_0}$). But before that, recall that $V_+$ is a fundamental domain for the action of ${}^rW$. The group $W^+$ acts on the set $\cC\cap V_+$ of dominant centres of residual subspaces. Now let ${\tilde \cC}^+$ be the set of $W^+$-orbits of elements in $\cC\cap V_+$. It follows that we can write $\cL$ as the union of $W$-orbits of subspaces $L\in\cL$ for which $c_L\in{\tilde \cC}^+$. We then have
\begin{align*}
X_{V,\lambda_0}(f) &= \sum_{c\in \cC} X_c(f|_{c+iV}))\\
&= \sum_{c\in {\tilde \cC}^+} \sum_{w\in W/ W_c}Y_c\circ w^*\left({A_{wc}}(f)|_{wc+iV}\right)\\
&= \sum_{c\in {\tilde \cC}^+} \frac{|W|}{| W_c|}Y_c\left({A_0}(f)|_{c+iV}\right),
\end{align*}
from which the result follows, in view of (\ref{eq:locY}) and (\ref{eq:nuLdef}).
\end{proof}

\begin{prop}\label{prop:symmetric}
Let $\phi,\psi\in \cP(X_T^G)$. The inner product between the normalized pseudo-Eisenstein series satisfy
\begin{equation}\label{eq:+sym}
(\theta_{\phi,0},\theta_{\psi,0}) = \frac{1}{|W|}\sum_{L\in W\backslash\cL}  \int_{L^{\textup{temp}}} r\check{r} A_0(\psi)(A_0(\phi^-))\circ(-\id))d\nu_{WL}.
\end{equation}
\end{prop}

\begin{proof}
Using Lemma \ref{lem:easyRewriting} and (\ref{eq:pushfwd}) with $f = (\check{r}\psi)\Sigma_W((\check{r}\phi)^-)$, from (\ref{eq:innernorm}) and (\ref{eq:Rrfunc}) we obtain
\[
(\theta_{\phi,0},\theta_{\psi,0}) =\frac{1}{|W|}\sum_{L\in\cL}  \int_{L^{\textup{temp}}} r\check{r} A_0(\psi)(A_0(\phi^-)\circ(-\id))d\nu_L,
\]
an the result follows from (\ref{eq:nuLdef}).
\end{proof}

\begin{defn}
For each $L\in W\backslash\cL$, define the measure $\mu_{WL}$  on $WL^{\textup{temp}}$ by
\begin{equation}\label{eq:Meas}
d\mu_{WL}(\lambda) := \left(\frac{|W|}{r(-\lambda,\overline{\chi})r(\lambda,\chi)}\right)d\nu_{WL}(\lambda).
\end{equation}
with $\nu_{WL}$ be as in (\ref{eq:nuLdef}).
\end{defn}

\begin{thm}\label{thm:Theorem}
For each $L\in W\backslash\cL$, the measure $\mu_{WL}$ is positive. Furthermore, the bilinear expression
\begin{equation*}
\langle\phi,\psi\rangle_{WL}:= \int_{WL^{\textup{temp}}}
\overline{A_0(r\check{r}\phi)} A_0(r\check{r}\psi)d\mu_{WL}
\end{equation*}
defines a positive semidefinite Hermitian form on the space of Paley-Wiener
functions on $V_\bbC$. The radical of this pairing consists of Paley-Wiener
functions $\phi$
for which we have $A_0(r\check{r}\phi)|_{WL^{\textup{temp}}}=0$.
We have a continuous map, isometric with respect to $\langle\phi,\psi\rangle_{WL}$ and with dense image
$A_{WL}:\cP(V_\bbC)\to L^2(WL^{\textup{temp}},\mu_{WL})^{W}$ given by $\phi\mapsto {A_0}(r\check{r}\phi)|_{WL^{\textup{temp}}}$. Finally we have
\begin{equation*}
(\theta_{\phi,0},\theta_{\psi,0})=
\sum_{L\in W\backslash\cL}\langle\phi,\psi\rangle_{WL}.
\end{equation*}
\end{thm}

\begin{proof}
Let us first consider the growth behaviour on $L^{\textup{temp}}$ of each factor of (\ref{eq:+sym}). Each summand of the integrand is given by the product of
certain Paley-Wiener functions on $L^{\textup{temp}}$ times factors which are of moderate growth on $L^{\textup{temp}}$ by
the analytical properties of $\rho_\alpha(s,\chi)$ which were discussed in Subsection \ref{sub:Rewriting}.

Next, note that the integrand of each factor of (\ref{eq:+sym}) is Hermitian and regular. Indeed, interchanging the roles of $\phi$ and $\psi$ results in complex conjugation of the restriction to $L^{\textup{temp}}$. 

To conclude the proof, we must show that the measure $\mu_{WL}$ defined in (\ref{eq:Meas}) is positive. We remark that, by $ W$-invariance, the expression $r(\lambda,\chi)r(-\lambda,\overline{\chi})$ is indeed real-valued. Further, given a residual subspace $L$, we can assume that this $L$ is chosen so that $\rR_{\chi,L}$ is a parabolic root subsystem. 
In this case, if $\lambda=c_L+i\mu\in L^{\textup{temp}}$
and if $w_L\in W({\rR_{\chi,L}})$ denotes the longest element
in the Weyl group $W({\rR_{\chi,L}})$ then $w_L(c_L)=-c_L$ and
$w_L\mu=\mu$. In other words, for $\lambda\in L^{\textup{temp}}$ we have
\begin{equation}\label{eq:conj}
-w_L\lambda=\overline{\lambda}.
\end{equation}
It follows that, for all $\lambda\in L^{\textup{temp}}$,
\[r(\lambda,\chi)r(-\lambda,\overline{\chi}) = C\times\prod_{\alpha\in\rR_{\chi,+}\setminus \rR_{\chi,L}}|\rho_\alpha(\langle\ra,\lambda\rangle,\chi)|^2,\]
where $C$ is the product of all constant values $\rho_\alpha(\langle\ra,c_L\rangle,\chi)$, for $\ra\in \rR_{\chi,L}$. It follows that the positivity of $r(\lambda,\chi)r(-\lambda,\overline{\chi})$ (and consequently of $\mu_{WL}$) reduces to the case in which $L$ is a residual point. Hence, we can assume, by induction, that the measures thus defined are positive for all orbits of residual subspaces with $\dim L>0$. Define
\[\cP(V_\bbC)^{\textup{ds}}:=\{\phi\in \cP(V_\bbC)\mid A_0(r\check{r}\phi)(c) = 0, \textup{ for all } c\textup{ residual}\}.\]
From the inductive hypothesis and the methods presented so far, it follows that the maps $\phi\mapsto A_{WL}(\phi)$ yield an isometry
\[L^2(G(F)\backslash G(\bbA),\xi)\supseteq \overline{\textup{span}\{\theta_\phi\mid\phi\in \cP(V_\bbC)^{\textup{ds}}\}}\cong \bigoplus_{\substack{L\in {}^rW\backslash \cL\\\dim L>0}} L^2(WL^{\textup{temp}},d\mu_{WL})^W.\]
Hence, given $\epsilon>0$ and $\psi\in \cP(V_\bbC)$, there exists $\phi\in \cP(V_\bbC)^{\textup{ds}}$ for which the $L^2$-norm
\begin{equation}\label{eq:approx}
\left\|\left({A_0}(r\check{r}\phi) - {A_0}(r\check{r}\psi)\right)\big|_{WL^{\textup{temp}}}\right\|<\epsilon,
\end{equation}
for all residual $L$ of positive dimension. Fix a residual point $c$ and let $\psi$ be a Paley-Wiener function for which ${A_0}(r\check{r}\psi)(c)\neq 0$ and $ {A_0}(r\check{r}\psi)(c') =0$ for $c'\notin {}^rWc$. 
From (\ref{eq:approx}) we can assume further that $\|A_{WL}(\psi)\|$ is very small on each orbit of residual subspace $L$ of positive dimension. 
If it were the case that the automorphic measure was negative on $Wc$, we would have, from Proposition \ref{prop:symmetric}, the induction hypothesis and the assumption on $\psi$ that
\[0\leq (\theta_{\psi,0},\theta_{\psi,0}) = \sum_{w\in W}\|{A_0}(r\check{r}\psi)(wc)\|^2\mu_{WL}(wc) + \sum_{\substack{L\in W\backslash \cL\\\dim L>0}}\langle\psi,\psi\rangle<0,\]
which is a contradiction.
\end{proof}

\subsection{Arthur Parameters and residual subspaces}\label{sub:APvsRS}

In this section, we link the the set of orbits of residual subspaces for the action of $W = W_{\cO(\chi)}$, with the set of Arthur parameters of Definition \ref{d:APspace}.

\begin{prop}\label{prop:ResvsAP}
The space $\ap$ is in bijection with $\cup\{WL^{\textup{temp}}\mid L\in W\backslash \cL\}\subseteq X_T^G$.
\end{prop}
\begin{proof}
For each $\psi\in \preap$, let $\psi'$ be in the conjugacy class $[\psi]$ such that $\psi'|_{W_F}=\varphi_{\chi,\nu}$ with $\nu$ unitary and that $\rho' = \psi'|_{SL(2,\bbC)}$ maps the torus of diagonal matrices of $SL(2,\bbC)$ inside $\dT$. Define $D:\ap \to W\backslash \fa^{G*}$ via
\[
D([\psi]) = \nu + d\rho'\left(
    \begin{pmatrix}
        1/2 & 0 \\ 0 & -1/2
    \end{pmatrix}
    \right).
\]
Conversely, given $\lambda = c_L + i\nu \in L^{\textup{temp}}$ for some $L\in\cL$, choose a $W$-conjugate of $L$ so that $c_L$ is dominant and thus $2c_L$ is the weighted Dynkin diagram of a nilpotent orbit of $\dHc = C_{\dG}(\varphi_\chi)^\circ$. The center determines $SL(2,\bbC)\to\dG$ and we let $\nu$ determine the homomorphism $W_F\to{}^LG$.
\end{proof}


With the identification of Proposition \ref{prop:ResvsAP} at hand, we let $\Xi :=  \cup\{WL^{\textup{temp}}\mid L\in W\backslash \cL\}$ and define \index{Measures!$\mu_0$}
\begin{equation}\label{eq:ParameterMeas}
\mu_0 := \sum_{L\in W\backslash\cL}\mu_{WL}.
\end{equation}
Using \ref{thm:Theorem} and the bijection of Proposition \ref{prop:ResvsAP}, it follows that there exists an isometry between $L^2(G(F)\backslash G(\bbA_F),\xi)_{[T,\cO(\chi)],0}^K$ and $L^2\left(\Xi,\mu_0\right)$. The next task in order to obtain our main result is to interpret this isometry by means of an integral transform. This is addressed in the next section.

\subsection{Finalizing the proof of Theorem \ref{thm:LowerBound}}
With the notions of normalized Eisenstein series $\cE_0(\nu,\chi,g)$\index{$\cE(\nu,\chi,g)$!$\cE_0(\nu,\chi,g)$} and normalized pseudo-Eisenstein series $\theta_{\phi,0}$ at hand, we shall argue next that the integral transform
$\fF:\Ltwoxi_{[T,\cO(\chi)],0}^K\to L^2(\Xi,\mu_0)$ \index{$\fF$}
\[
\fF(f)(\nu) = \int_{\aq} f(g) \cE_0(-\nu,\overline{\chi},g) dg
\]
is an isometric isomorphism of Hilbert spaces.
\begin{lem}\label{lem:L2IP1}
Given $\phi\in \cP(X_T^G)$ define
\begin{equation*}
u_\phi(\nu) := \int_{G(F)Z_{G}(\bbA_F)\backslash G(\bbA_F)}\overline{\theta_\phi(g)}\cE(\nu,\chi,g)  dg
\end{equation*}
This integral converges for $\re(\nu) = \nu_0\gg 0$ and defines a holomorphic function in its domain of convergence that satisfies
\[
u_\phi(\nu)=c(-\nu)^{-1}\left(\Sigma_W(\phi^-)(\nu)\right).
\]
\end{lem}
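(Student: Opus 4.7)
The plan is to extract the lemma from the scalar product formula (\ref{eq:innerX}) by testing against an arbitrary Paley-Wiener function $\psi$. The key observation is that $R_\phi$ was defined precisely so that (\ref{eq:innerX}) combined with the definition (\ref{eq:X}) of $X_{V,p_V}$ reads
\begin{equation*}
(\theta_\phi,\theta_\psi)=\int_{\re(\lambda)=\lambda_0}\psi(\lambda)\frac{R_\phi(\lambda)}{c_{\HH}(-\lambda)}d\lambda.
\end{equation*}
On the other hand, if one can unfold $(\theta_\phi,\theta_\psi)$ by substituting the defining contour integral (\ref{eq:pseudo-Eisenstein}) for $\theta_\psi$ and interchanging the order of integration via Fubini, one obtains
\begin{equation*}
(\theta_\phi,\theta_\psi)=\int_{\re(\lambda)=\lambda_0}\psi(\lambda)u_\phi(\lambda)d\lambda.
\end{equation*}
Comparing these two expressions for all $\psi\in PW(a_{T,\bbc}^{G*})$ and using that Paley-Wiener functions on the contour are dense enough to separate holomorphic integrands then forces the desired identity $u_\phi(\lambda)\,c_{\HH}(-\lambda)=R_\phi(\lambda)$ in this region.

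The convergence and holomorphy of $u_\phi$ rest on two standard inputs. First, the pseudo-Eisenstein series $\theta_\phi$ attached to a Paley-Wiener function $\phi$ is rapidly decreasing on Siegel sets once the contour $\lambda_0$ in (\ref{eq:pseudo-Eisenstein}) is chosen sufficiently deep in the positive Weyl chamber, cf.\ \cite[II.1.10]{MW2}. Second, for $\re(\lambda)=\lambda_0\gg 0$ the unramified Borel Eisenstein series $E(\lambda,g)$ is absolutely convergent and of moderate growth on Siegel sets, uniformly for $\lambda$ in compact subsets of the region of convergence. The product of a rapidly decreasing function by a function of moderate growth is integrable over $G(F)Z_G\backslash G(\bba)$, giving absolute convergence of the defining integral of $u_\phi$; the uniformity in $\lambda$ then yields holomorphy by differentiation under the integral sign.

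The main obstacle I anticipate is the Fubini justification in the second computation above: I must bound the iterated modulus
\begin{equation*}
\int_{G(F)Z_G\backslash G(\bba)}\int_{\re(\mu)=\lambda_0}|\psi(\mu)|\,|E(\mu,g)|\,|\theta_\phi(g)|\,|d\mu|\,dg,
\end{equation*}
using the Paley-Wiener decay of $\psi$ (faster than any polynomial in $|\im(\mu)|$) to make the inner integral over the vertical contour absolutely convergent with bound of moderate growth in $g$, and then combining this with the rapid decay of $\theta_\phi$ on Siegel sets to integrate in $g$. Once this step is secured, the inner $g$-integral is literally $u_\phi(\mu)$, and the final density argument identifying $u_\phi$ with $c_{\HH}(-\cdot)^{-1}R_\phi$ is standard, since both sides are holomorphic on the contour $\re(\lambda)=\lambda_0$ and their integrals against every $\psi\in PW(a_{T,\bbc}^{G*})$ agree.
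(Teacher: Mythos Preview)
Your proposal is correct and follows essentially the same approach as the paper: test against an arbitrary $\psi\in PW(a_{T,\bbc}^{G*})$, swap the $\lambda$- and $g$-integrals using the estimates of \cite[Proposition II.1.10]{MW2} to recognize $(\theta_\phi,\theta_\psi)$, invoke (\ref{eq:inner1})/(\ref{eq:innerX}), and conclude by density. Your write-up is in fact more explicit than the paper's about the convergence and holomorphy of $u_\phi$ and about the Fubini bound, but the underlying argument is identical.
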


\begin{proof}
Given $\psi\in \cP(X_T^G)$, one has
\begin{multline*}
\int_{\re(\nu)=\nu_0\gg 0}\psi(\lambda)\int_{G(F)Z_{G}(\bbA_F)\backslash G(\bbA_F)}\overline{\theta_\phi(g)}\cE(\nu,\chi,g)  dg\;d\nu
\\ = \int_{\re(\nu)=\nu_0\gg 0} \left(\Sigma_W(\phi^-)(\nu)\right)\psi(\nu)\frac{d\nu}{c(-\nu)},
\end{multline*}
in which the exchange of integrals is allowed by the estimates used in the proof of \cite[Proposition II.1.10]{MW2} and use was made of equations (\ref{eq:pseudo-Eisenstein}) and (\ref{eq:inner1}). Since this holds for all $\psi\in \cP(X_T^G)$ the result follows.
\end{proof}

\begin{lem}\label{lem:L2IP2}
Given $\phi\in \cP(X_T^G),$ it holds that
\begin{equation*}
\int_{G(F)Z_{G}(\bbA_F)\backslash G(\bbA_F)} \overline{\theta_{\phi,0}(g)}\cE_0(\nu,\chi,g) dg = A_0^-(r\check{r}(\phi^{-}\circ\id))(\nu).
\end{equation*}
\end{lem}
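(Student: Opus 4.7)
The plan is to substitute the factorization of $E_0$ from Lemma \ref{lem:NormEis} into the integrand, invoke Lemma \ref{lem:L2IP1}, and then identify the outcome with $A_0(r\phi^-)(-\lambda)$ via a $W$-symmetry of $\lambda\mapsto r(\lambda)r(-\lambda)$ which itself follows from the functional equation $\rho(s)=\rho(1-s)$.

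Concretely, in the region of absolute convergence $\re(\lambda)=\lambda_0\gg 0$ where Lemma \ref{lem:L2IP1} applies, the chain of identities reads
\begin{align*}
\int_{G(F)Z_G\backslash G(\bba)}\overline{\theta_\phi(g)}E_0(\lambda,g)\,dg
&=\tfrac{1}{|W|}c_{\HH}(-\lambda)r(-\lambda)\int_{G(F)Z_G\backslash G(\bba)}\overline{\theta_\phi(g)}E(\lambda,g)\,dg\\
&=\tfrac{1}{|W|}c_{\HH}(-\lambda)r(-\lambda)\cdot c_{\HH}(-\lambda)^{-1}R_\phi(\lambda)\\
&=\tfrac{1}{|W|}r(-\lambda)R_\phi(\lambda),
\end{align*}
and unfolding (\ref{eq:R}) converts the right-hand side into $\tfrac{1}{|W|}\sum_{w\in W}c_{\HH}(-w\lambda)\phi^-(-w\lambda)\,r(\lambda)r(-\lambda)/r(w\lambda)$.

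The only non-routine step is then the identity $r(\lambda)r(-\lambda)=r(w\lambda)r(-w\lambda)$ for all $w\in W$. I would verify this directly: writing $r(\lambda)r(-\lambda)=\prod_{\alpha\in\Phi^+}\rho(\alpha^\vee(\lambda))\rho(-\alpha^\vee(\lambda))$, each factor is symmetric in $\alpha\mapsto-\alpha$, so the product is effectively a product over the unordered pairs $\{\alpha,-\alpha\}$ and may equally well be taken over the positive system $w^{-1}\Phi^+$; the substitution $\alpha=w^{-1}\beta$ then yields $r(w\lambda)r(-w\lambda)$. Plugging $r(-w\lambda)=r(\lambda)r(-\lambda)/r(w\lambda)$ into the sum above collapses it to
\begin{equation*}
\tfrac{1}{|W|}\sum_{w\in W}c_{\HH}(-w\lambda)\,r(-w\lambda)\,\phi^-(-w\lambda)=A_0(r\phi^-)(-\lambda),
\end{equation*}
by the definition (\ref{eq:Ac}) of the averaging operator.

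I do not anticipate a serious obstacle. The identity is first established on the half-space $\re(\lambda)=\lambda_0\gg 0$ of absolute convergence; it extends globally by meromorphic continuation, since $r\phi^-\in\textup{PW}(V_\bbc)$ implies that $A_0(r\phi^-)$ is holomorphic on $V_\bbc$ (as $A_0$ preserves holomorphy on tubular neighborhoods, see the remark after (\ref{eq:Ac})), while the left-hand side inherits meromorphic continuation from $E(\lambda,g)$ via Lemma \ref{lem:L2IP1}. The single non-formal input is the $W$-invariance of $r(\lambda)r(-\lambda)$ recorded above, so the proof runs in close analogy with \cite[Theorem 3.18]{HO1}.
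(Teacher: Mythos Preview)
Your proof is correct and follows essentially the same route as the paper: substitute the factorization of $E_0$ from Lemma \ref{lem:NormEis}, apply Lemma \ref{lem:L2IP1}, expand $R_\phi$ via (\ref{eq:R}), and use the $W$-invariance of $r(\lambda)r(-\lambda)$ (which the paper records just after Lemma \ref{lem:EasyComp}) to collapse the sum to $A_0(r\phi^-)(-\lambda)$. The only difference is that you spell out the $W$-invariance step and the meromorphic continuation explicitly, whereas the paper leaves these implicit.
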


\begin{proof}
Recall that $\theta_{\phi,0}(g) = \theta_{\check{r}\phi}$.
Using Lemmas \ref{lem:NormEis}, \ref{lem:L2IP1} and (\ref{eq:R-NF}), we get
\begin{eqnarray*}
\int_{G(F)Z_{G}(\bbA_F)\backslash G(\bbA_F)} \overline{\theta_{\phi,0}(g)}\cE_0(\nu,\chi,g) dg
&=& \frac{1}{|W|}c(-\nu)r(\nu,\chi)r(-\nu,{\overline\chi})u_\phi(\nu)\\
&=& \frac{1}{|W|}\sum_{w\in W}c(-w\nu)r(-w\nu,w{\overline \chi})\phi^-(-w\nu),
\end{eqnarray*}
where we used in the last line that $\frac{r(\nu,\chi)}{r(w\nu,w\chi)} = \frac{r(-w\nu,w{\overline \chi})}{r(-\nu,{\overline \chi})}$. This proves the Lemma.
\end{proof}

Using that $-\overline{\nu}\in W\nu$ (see (\ref{eq:conj})) so that $ A_0^-(r\check{r}(\phi^{-}\circ\id))(\nu) = \overline{A_0(r\check{r}\phi)(\nu)}$ and $\overline{\cE_0(\nu,\chi,g)} = \cE_0(-\nu,\overline{\chi},g)$ for all $\nu$ in the support $\cup_{L\in W\backslash \cL} WL^{\textup{temp}}$, it follows from Lemma \ref{lem:L2IP2} that $\cF(\theta_{\phi,0})(\nu) = A_0(r\check{r}\phi)(\nu)$ and hence the isometry part of Theorem \ref{thm:LowerBound} follows immediately from Theorem \ref{thm:Theorem}.

Finally, for $v$ any local place, let $A_v$\index{$G$!$A_v$} denote the maximal $F_v$-split torus in the local group $G_v$. Denote by $\cH(G_v,K_v)$\index{$\cH(G(\bbA_F),K)$!$\cH(G_v,K_v)$} the algebra of compactly supported functions on $K_v\backslash G_v/K_v$. We let $S_v:\cH(G_v,K_v)\to \bbC[X_*(A_v)]^{W}$ denote the Satake isomorphism and the $\ast$-structure of $\cH(G_v,K_v)$ can be described via
\begin{equation}\label{eq:Star1}
S_v(h^*)(\tau) = \overline{S_v(h)(\overline{\tau^{-1}})},
\end{equation}
for every $\tau\in\hat{A}_v$. For $v$ Archimedean $\cH(G_v,K_v)$ is the subalgebra of left and right invariant elements of the corresponding Archimedean Hecke algebra (the Archimedean Hecke algebra is described in \cite[Paragraph 3]{F} or \cite[1.1]{BJ} although our notation differs from the one in \cite{BJ}). We let $S_v:\cH(G_v,K_v)\to \textup{Sym}[\bbC \otimes X_*(A_v)]^{W}$ denote now the Harish-Chandra isomorphism and one can describe the $\ast$-structure similarly as
\begin{equation}\label{eq:Star2}
S_v(h^*)(\lambda) = \overline{S_v(h)(-\overline{\lambda})},
\end{equation}
for every $\lambda\in\Lie(\hat{A}_v)$. Now, for each $\nu\in X_T^G$ the local factors of the twisted automorphic character $\chi_\nu$ yields characters of $\cH(G_v,K_v)$ by means of $S_v$, and thus a character of $\cH(G(\bbA), K )$ given by
\[\chi_\nu(h) = \prod_vS_v(h_v)(\chi_{v,\nu}),\]
for any element of the form $h = \otimes_v h_v\in \cH(G(\bbA), K )$. A straightforward computation yields that, if $\nu$ is such that $-\overline{\nu} \in W\nu$ (see(\ref{eq:conj})), then
\begin{equation}\label{eq:star}
\omega_\nu(h^*) = \overline{\omega_\nu(h)},
\end{equation}
for all $h\in\cH(G(\bbA), K )$. Now, it is known that the Eisenstein series $\cE(\nu,\chi,g)$ is an eigenfunction for the convolution action of $\cH(G_v,K_v)$ with eigenvalue $\chi_{v,\nu}$ for each $v$. It then follows that, for $h_v\in \cH(G_v,K_v)$ and for $\nu\in \Xi \cong\ap$, we have
\begin{eqnarray*}
\fF(h_v\cdot f)(\nu) &=& \int_{G(F)Z_{G}(\bbA_F)\backslash G(\bbA_F)}\cE_0(-\nu,{\overline \chi},g)(h_v\cdot f)(g) dg\\
&=& \int_{G(F)Z_{G}(\bbA_F)\backslash G(\bbA_F)} \overline{(h_v^*\cdot \cE_0(\nu,\chi,g))}f(g) dg\\
&=& \chi_{v,\nu}(h_v)\fF(f)(\nu),
\end{eqnarray*}
proving the equivariance of $\fF$.

\section{The case $[T,\cO(1)]$ for $G$ split: The cascade of contour-shifts}\label{s:Cascade}
From here onward we will concentrate on proving Theorem \ref{thm:MainTheorem}. 
The method of proof owes a lot to the proof in the classical cases by Moeglin \cite{M1}. We use induction to the rank of $G$, and for a suitable 
maximal proper standard Levi subgroup $G'\subset G$ we define a combinatorial structure $C$, called a ``cascade'', consisting of a 
collection of segments in pole spaces with the purpose of  
prescribing a cascade of 
successive contour shifts which result in rewriting the 
inner product formula of two pseudo Eisenstein series $\theta_\phi, \theta_\psi$ as given in Corollary \ref{cor:NF.InProd} as a sum of 
integrals of iterated residues at various pole spaces $L$ over 
a contour of the form $z_L+iV^L$, where (and this is the main point) 
$z_L$ is close to the center $c_L$ of $L$. The cascade $C$ needs to 
possess several favorable properties in order that when performing 
these successive contour shifts, no residues of the integrands caused 
by the critical zeroes of $\rho$ will be picked up. Cascades $C$ 
with such favorable properties are easy to define for 
all classical cases, but for the exceptional cases this was done with 
the help of a computer in \cite{DMHO}. 

A cascade $C$ for $R'\subset R$ consists collection of pairs $(\sigma,L)$ with $L$ a $\Omega$-pole space and $\sigma\subset L$ a segment. Here $\Omega$\index{$\omega$!$\Omega$} is a rational $(r,0)$-form on $V_\bbC$ given by 
\begin{equation}
\Omega(\lambda):=\frac{d\lambda}{c'(\lambda)c(-\lambda)}
\end{equation}
Observe that $\Omega$ is $W'$-invariant, in fact it is the $W'$-average 
of $\omega$. 
For its definition and properties we refer to \cite[Appendix 6]{DMHO}. 
The cascade $C$ comes equipped with several additional data. The 
set $\cCO^C$\index{$\cCO^C$} denotes a set of representatives of the $W'$-orbits 
of $\Omega$-pole spaces $L$ which appear in $C$, i.e. for which 
there exists a segment $\sigma\in L$ such that $(\sigma,L)\in C$.

For our present purpose it is enough to recall the main result of 
\cite{DMHO}: 
\begin{thm}\label{thm:mainC}
Let $\rho$ an entire function on $\bbC$ 
such that $\Lambda:=s^{-1}(1-s)^{-1}\rho$ satisfies the properties 
mentioned in Proposition \ref{prop:analyticprops}\footnote{We do not need 
that $\Lambda$ is the completed Dedekind zeta function $\Lambda_F$ of a 
number field.}. 
Choose constants $\fR,\fT,\fT'>0$ with $\fT>\fR$ and $\fT'>3\fT+2\fR^2$ as in \cite[V.2.2]{MW2}. Let $\phi,\psi\in\cP^\fR(V_\bbC)$.
There exists a maximal proper standard Levi subgroup $G'\subset G$ with 
root systems $R'\subset R$, and a Cascade $C$ 
for $R'\subset R$ (see \cite[Appendix 6]{DMHO} satisfying the following
property: Let $\cCO^{C,presym}$\index{$\cCO^C$!$\cCO^{C,presym}$} be collection of pole spaces 
in $\cCO^C$ after removing the $M$ such that $\textup{Ord}_M(\Omega)=0$ and $M^{\textup{temp}}\not\subset\cup_{N\in \cL(\Omega_r)}N^{\textup{temp}}$.
For every pole space $M\in \cCO^{C,presym}$ we can choose a $\fT'$-general point $z_M$ close to the center $c_M$ of $M$ such that   
\begin{equation}\label{eq:nearcenter}
(\theta_\phi,\theta_\psi)=_{\fT}\sum_{M\in \cCO^{C,presym}}\int_{(z_M+iV^M)_{\leq \fT}}\sum_{\cF\in \cF(M)}C_{M,\cF}
\textup{Res}_{M,\cF}((\Sigma_{W}(\phi^-))(\Sigma'_{W'}(\psi))\Omega)
\end{equation}
for suitable constants $C_{M,\cF}$. 
\end{thm}
This is the starting point for our proof of the main result Theorem \ref{eq:norm=all} in the next section.

\section{Proof of Theorem \ref{thm:MainTheorem}: Symmetrization and comparison with 
residue distributions}\label{s:MainResult}
We can define the inner product formula (\ref{eq:inprodpseueisNF}) for more general functions $\Lambda$ as in Theorem \ref{thm:mainC}. The next result holds true in this level of generality.

\begin{thm}\label{t:MainResult2}
Let $q_{\fT}$ denote the orthogonal 
projection onto $(L^{2,K}_{[T,\cO(1)]})_{[-\fT,\fR]}\subset L^{2,K}_{[T,\cO(1)]}$. 
For all $\phi, \psi\in\cP^\fR(V_\bbC)$ we have: 
\begin{align}\label{eq:truncmain}
(\theta_\phi,&q_{\fT}(\theta_\psi))\\
\nonumber&=
\sum_{L\in W\backslash\cL}|W|\int_{L^{\textup{temp}}_{\leq \fT}}(r(-\lambda)r(\lambda))^{-1}
{A_0}(r\psi)(\lambda)\overline{{A_0}(r\phi)(\lambda)}d\nu_L(\lambda),
\end{align}
where the left-hand side of (\ref{eq:truncmain}) is interpreted for general functions $\Lambda$, as above.
\end{thm}
\begin{proof}
We use a comparison to the computation of $X_{V,\lambda_0}(\theta)$ for $\theta\in \cP^\fR(V_\bbC)$ (with $\fR$ as in Theorem \ref{thm:mainC}) 
in terms of residue distribution. We know the answer in this case, since by the uniqueness \cite[Lemma 3.1]{HO1}, the local tempered distributions representing $X_{V,\lambda_0}$ on the tempered pole spaces $M^{\textup{temp}}$ can be expressed in terms of the iterated residues 
prescribed by the cascade $C$. This will yield the 
required information about the iterated residues. 
Using Theorem \ref{thm:mainC} and computations as in Lemma \ref{lem:easyRewriting} (see also \cite[Proposition 3.1]{HO1}), we have for all $\theta\in\cP^\fR(V_\bbC)$ that: 
\begin{align}\label{eq:compare}
X_{V,\lambda_0}(\theta)
&=\sum_{M\in \cCO^{C,presym}}\int_{z_M+iV^M}
C_{\cF,M}\sum_{\cF\in \cF(M)}\textup{Res}_{M,\cF}(|W'|A'_0(\theta)\Omega)\\
\nonumber&=\sum_{c\in \mathcal{C}^+}\frac{|W|}{|W_c|}Y_c(A_0(\theta))\\
\nonumber&=\sum_{c\in \mathcal{C}^+}Y_{Wc}(A_0(\theta))\\
\nonumber&=Y(A_0(\theta))
\end{align}
where for each dominant center $c$ we have (see (\ref{eq:locY})) $Y_c=\sum_{\{L\in \cL\;|\;c_L=c\}}Y_L$ and $Y_L$ is the push forward (\ref{eq:pushfwd}) of a positive smooth $N_W(L)$-invariant 
measure ($N_W(L)$ is the normalizer in $W$ of the space $L$) which extends holomorphically to a tubular neighborhood of $L^{\textup{temp}}\subset L$. 
Moreover, the $Y_{Wc}$\index{$X_{V,\lambda_0}$!$Y_{Wc}$} are defined as integration against $d\nu_{WL}$ (see (\ref{eq:nuLdef})) and 
$Y:=\sum Y_{Wc}$\index{$X_{V,\lambda_0}$!$Y$}.  The resulting sum of push forwards of 
smooth positive measures $\sum_{c\in \mathcal{C}^+}Y_{Wc}$ is a $W$-invariant functional on $\cP^\fR(V_\bbC)$, which is a distribution $Y_c$ of order zero on each real subspace of the form $c+iV$. 

Both the LHS and the RHS are sums of tempered distributions which are normal derivatives of boundary value distributions on $L^{\textup{temp}}\subset L_\bbC$ where $L$ is a pole space such that $c_L=c$ 
see \cite{HO1}.
What we want to establish is that the kernels defining the LHS and the RHS of (\ref{eq:compare}) are equal.
We can only conclude something like that if we have a unique representation of the functional (\ref{eq:compare}) 
by these kernels. In order to have a unique representation we need to symmetrize. 

The cascade $C$ of contour shifts comes with a collection of representatives $\cCO^C$ of the $W'$-orbits of $\Omega$-pole spaces in $C$ 
(cf. \cite[Appendix 6]{DMHO}), and for 
each $L\in\cCO^C$ a pair $(L_0,d)$ 
consisting of a standard $\omega$ pole space $L_0$ and
$d\in W$ of minimal length in $W'd$ such that 
$L=dL_0$, together with a choice of elements $w_i=u_id\in W'd$ such that $\{L^i=w_i(L_0)\}$
runs over the collection of $\Omega$ pole spaces in 
$C\cap W'L$. 
We choose a set $\cC(\omega)$ of standard $\omega$-pole spaces which form a set of representatives of the $W$-orbits in $\cCO^C$.

On each $L^i_{\bbC}$ we have the meromorphic $(\textup{dim}(L^i),0)$-form 
on $L^i$ defined by taking the sum of the iterated residues of $A'_0(\theta)\Omega$ on $L^i$ along the flags of pole 
spaces ending in $L^i$ (cf. 
\cite[Corollary 5.4]{DMHO}):  
\begin{equation}
\sum_{\cF\in\cF(L^i)}C_{\cF,L^i}\textup{Res}_{\cF,L^i}(A_0'(\theta)\Omega)
\end{equation} 
where $\cF(L^i)$ denotes the collection of flags of pole spaces in the cascade $C$ which contain $L^i$. 
We form a kernel $K_{L_0}$, a meromorphic function on $L_{0,\bbC}$, by 
\begin{equation}\label{eq:defK}
K_{L_0}(\theta)d^{L_{0,\bbC}}(\lambda):=\sum_i \sum_{\cF\in\cF(L^i)}C_{\cF,L^i}w_i^*(\textup{Res}_{\cF,L^i}(A'_0(\theta)\Omega))
\end{equation} 
This kernel satisfies: 
\begin{lem}\label{lem:kernX}
\begin{enumerate}
\item[(i)] For all entire functions $\theta$ on $B_\fR=\{\lambda\in V_\bbC\mid |\textup{Re}(\lambda)|<\fR\}$,   
$K_{L_0}(\theta)$ is a meromorphic function on $L_{0,\bbC}^\fR = L_{0,\bbC} \cap B_\fR$.
\item[(ii)] There exists a decomposition $K_{L_0}(\theta)=\sum_{M\in WL_0}K^M(\theta)$ 
where for each $M\in WL_0$,  $K^M(\theta)$ is a meromorphic 
function on $L_{0,\bbC}^{\fR}$, and with the kernel $K^M$ supported on $M$ in the sense that 
$K^M(\theta)=0$ if $\theta$ vanishes on $M$ of high order. There exists a $W$-invariant polynomial $S_M$,  
independent of $\theta$, such that $(S_M|_{M_\bbC})K^M(\theta)\in \cP^{\fR}(L_\bbC)$ for all $\theta\in \cP^\fR(V_\bbC)$.
\item[(iii)] Let  $H^{\fR}$ be the algebra of holomorphic $W$-invariant 
functions on $B_\fR$ of moderate growth in the imaginary direction (the multiplier algebra of $\cP^{\fR}(V_\bbC)^W$). 
Suppose that $n\in\bbZ_{\geq 0}$ 
is the order of the kernel $K^M$ on $M$, i.e. the minimal integer such that $K^M(\theta)$ vanishes 
for all $\theta\in \cP^\fR(V_\bbC)$ which vanish on $M$ to order $n+1$. Suppose that  $\theta\in \cP^\fR(V_\bbC)$ has vanishing order 
$n$ on $M$. Then for all $f\in H^{\fR}$ we have, for all $M\in WL_0$, that $K_{L_0}^M(f\theta)=f|_MK^M(\theta)$. 
\end{enumerate}
\end{lem}
\begin{proof}
We have (cf. \cite[equation (16)]{DMHO}) for $\lambda\in L_{0,\bbC}$ that: 
\begin{equation}\label{eq:resM}
w_j^*(\textup{Res}^M_{\cF,L^j}(A'_0(\theta)\omega)):=\sum_i D_{L^j,\cF,i}(\sum_{u\in W': ud_j(L_0)=M}c'(ud_j\lambda)\theta(ud_j\lambda)|_{L_\bbC}\omega_{L,\cF,i})
\end{equation}
which easily shows (i) and (ii) in view of (\ref{eq:defK}). 
For (iii) note that $w_j^*(\textup{Res}^M_{\cF,L^j}(A'_0(\theta)\omega))$ satisfies \cite[eq. (1) in proof of Lemma V.2.10]{MW2}. 
From this we see that \cite[Lemma V.2.10]{MW2} applies, and this yields (iii).
\end{proof}
As a result we obtain 
\begin{equation}\label{eq:Xpresymm}
X_{V,\lambda_0}(\theta)=
\sum_{M\in \cC(\omega)}\int_{z_M+iV^M}K^M(\theta)(\lambda)d^M(\lambda)
\end{equation}
We can now symmetrise for each $\omega$-pole space $M$ over the normalizer $N_W(M)$. Indeed, we move the contours $z_M +iV^M$ to 
the contours $wz_M+iV^M$ in the orbit under $N_W(M)$ and then pull back to $z_M +iV^M$ with $w^*$, and take the average. This procedure 
introduces possible residues at pole spaces $L$ of codimension one in $M$, but clearly only those $L$ are involved for which $c_L=c_M$. 
At these pole spaces $L$ we symmetrise similarly over $N_W(L)$. Finally this results in a rewriting of (\ref{eq:Xpresymm}) 
in the form 
\begin{equation}\label{eq:Xsymm}
X_{V,\lambda_0}(\theta)=
\sum_{M\in \cC(\omega)}\int_{z_M+iV^M}K_X^{M,sym}(\theta)(\lambda)d^M(\lambda)
\end{equation}
where for all $\Omega$-pole spaces $M$, the meromorphic kernel $K_X^{M,sym}$ is invariant under $N_W(M)$ and clearly also 
satisfy Lemma \ref{lem:kernX}. 

Now we look at the right hand side $Y(A_0(\theta))$. In a similar fashion, but easier since the symmetrisation step over $N_W(M)$ 
is not necessary (the kernels are already $N_W(M)$-invariant now) we can write:
\begin{equation}\label{eq:Ysymm}
Y(A_0(\theta))=
\sum_{M\in \cC(\omega)}\int_{z_M+iV^M}K_Y^{M,sym}(\theta)(\lambda)d^M(\lambda)
\end{equation}
where each kernel $K_Y^{M,sym}(\theta)$ is $N_W(M)$-invariant. 

We claim now that for all pole spaces $M$ we have 
\begin{equation}\label{eq:Xsymm=Ysymm}
K_X^{M,sym}(\theta)=K_Y^{M,sym}(\theta)
\end{equation}
Indeed, suppose that we have a collection of meromorphic kernels $K_Z^{M,sym}(\theta)$ on the $\Omega$-pole spaces 
which satisfy Lemma \ref{lem:kernX}, are $N_W(M)$-invariant, and such that for all $\theta\in \cP^\fR(V_\bbC)$: 
\begin{equation}\label{eq:0symm}
0=\sum_{M\in \cC(\omega)}\int_{z_M+iV^M}K_Z^{M,sym}(\theta)(\lambda)d^M(\lambda)
\end{equation}
Suppose that \emph{not} all kernels $K_Z^{M,sym}$ are identically $0$. Then take $M$ of maximal dimension such that 
$K_Z^{M,sym}$ is not identically $0$, and take $\theta\in \cP^\fR(V_\bbC)$ such that it is of maximal vanishing order at $M$ 
while $K_Z^{M,sym}(\theta)\not=0$. 

Next for any $n\in\bbN$ we construct a $N_W(M)$-invariant polynomial $Q^M_n$ 
which is nonzero on $M$, which is zero of order at least $n$ at all pole spaces $N$ with $\textup{dim}(N)\leq \textup{dim}(M)$ 
and $N\not=M$, and such that $D(Q^M_n)|_M=0$ for all constant coefficients differential operators $D\in S(V_M)_+$ 
of degree less than $n$. First for each pole space $N$ with $\textup{dim}(N)\leq \textup{dim}(M)$ 
and $N\not=M$, and each $n\in \bbN$, we can find a polynomial $Q^M_{n,N}$ which is nonzero on $M$, such that 
$D(Q^M_{n,N})|_M=0$ for all constant coefficients differential operators $D\in S(V_M)_+$ of degree less than $n$, 
and vanishing on $N$ of order at least $n$. A construction is given in the proof of \cite[Lemma V.2.11]{MW2} (where this is 
applied to $M$ and $N=wM\not=M$, but the same construction works for any pole space $N\not=M$ such that 
$\textup{dim}(N)\leq \textup{dim}(M)$). 
Finally define $Q^M_n=\prod_{w\in N_W(M)} w^*(\prod_N Q^M_{N,n})$ 
where $N$ runs over the set of pole spaces with $\textup{dim}(N)\leq \textup{dim}(M)$ 
and $N\not = M$. This clearly has the properties we stated above. 

Using Lemma \ref{lem:kernX}(iii) and the choice of $\theta$ we see that it follows that:  
\begin{equation}
K^{M,sym}_Z(Q^M_n\theta)=Q^M_n|_MK^{M,sym}_Z(\theta)\not=0
\end{equation}
while for all $\omega$-pole spaces $N\not=M$ 
\begin{equation}
K^{N,sym}_Z(Q^M_n\theta)=0.
\end{equation}
(Indeed if $\textup{dim}(N)> \textup{dim}(M)$ then this follows from the definition of $M$, 
while for  $\textup{dim}(N)\leq \textup{dim}(M)$ with $N\not=M$ this follows from Lemma \ref{lem:kernX}(ii) 
and the properties of $Q^M_n$.)

Next observe that, due to the choice of $\theta$, for all $f\in H^\fR$ we obtain that $K^{M,sym}_Z(f\theta)=(f|_M)K^{M,sym}_Z(\theta)$.
By \cite[V.2.11]{MW2}(iii) there exists a polynomial $P_M$ on $M$ such that the set of restrictions $f|_M$ with 
$f\in H^\fR$ contains the subspace $(P_M|_M)\bbC[M_\bbC]^{N_W(M)}$ of $\bbC[M_\bbC]^{N_W(M)}$. By Lemma \ref{lem:kernX}(ii) 
there exists an $S_M\in \bbC[V_\bbC]^{N_W(M)}$ such that $S_M|_M$ is nonzero and for all $\theta\in \cP^\fR(V_\bbC)$:   
$K^{M,sym}_Z(S_M\theta)=(S_M|_M)K^{M,sym}_Z(\theta)\in \cP^{\fR}(M_\bbC)$. 

Altogether we thus obtain that 
\begin{equation}
0=\int_{c_M+iV^M}F(\lambda)(P_M|_M)(S_M|_M)K^{M,sym}_Z(Q^M_n\theta)(\lambda)d^M(\lambda)
\end{equation}
for all $F\in \bbC[M_\bbC]^{N_W(M)}$. Since $K^{M,sym}_Z(Q^M_nS_M\theta)(\lambda)\in \cP^{\fR}(M_\bbC)^{N_W(M)}$ and is nonzero, 
this is absurd.  This proves the assertion (\ref{eq:Xsymm=Ysymm}).

Now we return to (\ref{eq:nearcenter}). We follow the cascade of contour shifts as before, but this time
but this time with $\theta$ replaced by the meromorphic function $\psi\Sigma_W(\phi^-)$, and with the $\fT$-truncated integrals.
Because we have only moved the base points of order $0$ pole spaces inside their admissible sets, and because the positive order poles spaces $L_0\in \cC_0(\omega)$, the kernels $K_{L_0}$ 
have no denominators taking critical values at $c_{L_0}$ (cf.  \cite[Theorem 5.1, Theorem 5.2]{DMHO}), 
we obtain iterated residue integrals involving exactly the same kernels $K^{M,sym}_X(\theta)$ and $K^{M,sym}_Y(\theta)$ 
as before, but just with $\theta$ replaces by $\psi\Sigma_W(\phi^-)$. This yields: 
\begin{align}\label{eq:nearcenterA}
(\theta_\phi,\theta_\psi)&=_{\fT}\sum_{M\in \cCO^{C,presym}}\int_{(z_M+iV^M)_{\leq \fT}}
C_{\cF,M}\sum_{\cF\in \cF(M)}\textup{Res}_{M,\cF}(|W'|
A'_0(\psi\Sigma_W(\phi^-))\Omega)\\
\nonumber&=_{\fT}\sum_{M\in \cC(\omega)}\int_{(z_M+iV^M)_{\leq \fT}}K_X^{M,sym}(\psi\Sigma_W(\phi^-))(\lambda)d^M(\lambda)\\
\nonumber&=_{\fT}\sum_{M\in \cC(\omega)}
\int_{(z_M+iV^M)_{\leq \fT}}K_Y^{M,sym}(\psi\Sigma_W(\phi^-))(\lambda)d^M(\lambda)\\
\nonumber&=_{\fT}\sum_{c\in\cC_+}\frac{|W|}{|W_c|}\sum_{L\in\cL:c_L=c}\int_{(z_L+iV^L)_{\leq \fT}}A_0(\psi\Sigma_W(\phi^-))Y_{{\Sigma_{\chi,L}},c_L}(\{c_L\})d\mu^L(\lambda)\\
\nonumber&=_{\fT}\sum_{L\in W\backslash\cL}|W|\int_{(z_L+iV^L)_{\leq \fT}}(r(-\lambda)r(\lambda))^{-1}
{A_0}(r\phi^-)(-\lambda){A_0}(r\psi)(\lambda)d\nu_L(\lambda)\\
\nonumber&=_{\fT}\sum_{L\in W\backslash\cL}|W|\int_{L^{\textup{temp}}_{\leq \fT}}(r(-\lambda)r(\lambda)^{-1}
{A_0}(r\psi)(\lambda)\overline{{A_0}(r\phi)(\lambda)}d\nu_L(\lambda)
\end{align}
where in the  last line we used that for each $L\in\cL_+$, the kernel 
of the integral over $(z_L+iV^L)_{\leq \fT}$ is 
holomorphic in $L^{\textup{temp}}$. To see this, we first of all recall that on residual spaces $L\in\cL_+$, 
the roots which are constant on $L$ have integral values on $L$. For the non-constant roots involved in 
\begin{equation}
\frac{|W|}{r(-\lambda)}{A_0}(r\phi^-)(-\lambda)|_{L^{\textup{temp}}} = \Sigma_W(\phi^-)(\lambda)|_{L^{\textup{temp}}},
\end{equation}
applying \cite[Corollary 3.2, Theorem 3.7]{DMHO}
and the fact that on $L^{\textup{temp}}$ we have 
${A_0}(r\phi^-)(-w\lambda) = \overline{{A_0}(r\phi)(\lambda)}$ where $w\in W$ 
is such that $-w(c_L+i\mu)=c_L-i\mu$ if $\mu\in V^L$, we get that 
\begin{equation}
\frac{1}{r(\lambda)}{A_0}(r\psi)(\lambda)|_{L^{\textup{temp}}}
\end{equation}
is holomorphic on $L^{\textup{temp}}$ as well. 
Hence we can send all base points $z_L$ to the corresponding center 
$c_L$. 

At this point we can use the interpretation of the truncation of the integrals in terms of the orthogonal 
projection $q_{\fT}$ onto $(L^{2,K}_{[T,\cO(1)]})_{[-\fT,\fR]}\subset L^{2,K}_{[T,\cO(1)]}$
Equation (\ref{eq:nearcenterA}) and the above holomorphicity arguments 
show that the conditions of \cite[Lemma V.2.9]{MW2} are satisfied, so following its proof almost verbatim 
we obtain: 
\begin{align*}\label{eq:trunc}
(\theta_\phi,&q_{\fT}(\theta_\psi))\\
&=
\sum_{L\in W\backslash\cL}|W|\int_{L^{\textup{temp}}_{\leq \fT}}(r(-\lambda)r(\lambda))^{-1}
{A_0}(r\psi)(\lambda)\overline{{A_0}(r\phi)(\lambda)}d\nu_L(\lambda)
\end{align*}
\end{proof}
\subsection{Proof of main Theorem \ref{thm:MainTheorem}}
\begin{proof}
Taking the limit $\fT\to\infty$ in Theorem 
\ref{t:MainResult2} finishes the proof of the main Theorem \ref{thm:MainTheorem}.
\end{proof}

\section*{Acknowledgements}
During the period over which this project has taken place, we thank the financial supports given by the ERC-advanced
grant 268105; the grants of Agence Nationale de la Recherche with references ANR-08-BLAN-0259-02 and ANR-13-BS01-0012 FERPLAY; the Archimedes LabEx ANR-11-LABX-0033, the foundation A*MIDEX ANR-11-IDEX-0001-02, funded by the program ``Investissements d’Avenir” led by the ANR; the Engineering and Physical Sciences Research Council grant EP/N033922/1 (2016) and the special research fund (BOF) from Ghent University BOF20/PDO/058.

\printindex

\begin{thebibliography}{99}

\bibitem[A]{A} Arthur, J.,
{\it Unipotent automorphic representations: conjectures,}
in Orbites unipotentes et repr\'esentations, II.
Ast\'erisque {\bf 171-172} (1989), 13--71.

\bibitem[BC]{BC} Bala, P., Carter, R.W.,
{\it The classification of unipotent and nilpotent elements,}
Indagationes Mathematicae, {\bf 77}(1) (1974), 94--97.

\bibitem[Bu]{Bu} Bump, D.,
{\it Automorphic forms and representations,}
Cambridge Studies in Advanced Mathematics, {\bf 55}, Cambridge University Press, 1998.

 \bibitem[BJ]{BJ} Borel, A., Jaquet, H.,
 {\it Automorphic forms and automorphic representations,}
 in Automorphic forms, representations and {$L$}-functions ({P}roc. {S}ympos. {P}ure {M}ath., {O}regon {S}tate {U}niv., {C}orvallis, {O}re., 1977), {P}art 1, Amer. Math. soc., Providence, RI, 1979, pp. 189--207.

\bibitem[Car]{C} Carter, R.W., 
{\it Finite groups of Lie type,}
Wiley Classics Library, John Wiley
and sons, Chichester, UK, 1993.

\bibitem[Ca]{Ca} Casselman, .W.,
{\it The unramified principal series of p-adic groups. {I}. {T}he
spherical function,}
Compositio Math., {\bf 40}(3)(1980), 387--406.

 \bibitem[CKK]{CKK} Ciubotaru, D., Kato, M., Kato, S.,
 {\it On characters and formal degrees for classical and Hecke algebras,}
 Invent. Math. {\bf 187}(3) (2012), 589–-635.

\bibitem[Co]{Co} Cogdell, J., 
{\it Lectures on $L$-functions. Converse theorems and functoriality of $GL_n$,}
in Lectures on Automorphic $L$-functions, Fields Institute Monographs, Amer. Math. Soc.,
2004, pp. 3--96.

\bibitem[CO]{CO} Ciubotaru, D., Opdam, E., 
{\it A uniform classification of discrete series representations of affine Hecke algebras,}
Algebra Number Theory, {\bf 11} (2017), no. 5, 1089--1134.

\bibitem[DMHO]{DMHO} De Martino, M., Heiermann, V., Opdam, E.,
{\it Residue distributions, iterated residues, and the
spherical automorphic spectrum,}
arXiv:2207.06773 (2022).

 \bibitem[DMO]{DMO} De Martino, M., Opdam, E.,
 {\it Limit transition between the spherical spectrum of graded and affine Hecke algebras.}
 In preparation.

\bibitem[F]{F} Flath, D.,
{\it Decomposition of representations into tensor products,} 
in Automorphic forms, representations and {$L$}-functions ({P}roc. {S}ympos. {P}ure {M}ath., {O}regon {S}tate {U}niv., {C}orvallis, {O}re., 1977), {P}art 1, Amer. Math. soc., Providence, RI, 1979, pp. 179--183.

\bibitem[GL]{GL} Gelbart, S., Lapid, E. M.,
{\it Lower bounds for L-functions at the edge of the critical strip,}
Am. J. Math. {\bf 128} (2006), 619--638.

\bibitem[HC]{HC} Harish-Chandra,
{\it Automorphic forms on Semisimple Lie Groups,}
Lecture Notes in Mathematics, No. 62 Springer-Verlag, Berlin-New York, 1968.

 \bibitem[HO1]{HO1} Heckman, G.J., Opdam, E.M.,
 {\it Yang's system of particles and Hecke algebras,}
 Annals of mathematics {\bf 145} (1997), 139--173.

\bibitem[HO2]{HO2} Heckman, G.J., Opdam, E.M.,
{\it Harmonic analysis for affine {H}ecke algebras,}
in Current Developments in Mathematics (R. Bott, A. Jaffe, D. Jerison, G. Lusztig, I. Singer and S.-T. Yau, editors), Intern. Press, 1996, pp. 37--60.

\bibitem[H1]{H1} Heiermann, V.,
{\it D\'ecomposition spectrale et repr\'esentations sp\'eciales d'un groupe r\'eductif p-adique},
Journ. Inst. Math. Jussieu {\bf 3} (2004), 327--395.

\bibitem[H2]{H2} Heiermann, V.,
{\it Orbites unipotentes et p\^oles d'ordre maximal de la fonction $\mu$ de Harish-Chandra},
Canad. J. Math., {\bf 58} (2006), 1203--1228.

\bibitem[H3]{H3} Heiermann, V.,
{\it Op\'erateurs d’entrelacement et alg\`ebres de Hecke avec param\`etres d’un groupe r\'ductif p-adique: le cas des groupes classiques},
Sel. Math., {\bf 17} (2011), 713--756.

\bibitem[H4]{H4} Heiermann, V.,
 {\it  The value of the global intertwining operators on spherical vectors},
  Proc. Amer. Math. Soc. {\bf 147} (2019), 115--124.

\bibitem[HII]{HII} Hiraga, K., Ichino, A., Ikeda, T.,
 {\it Formal degrees and adjoint gamma factors (and errata)},
 J. Amer. Math. Soc., 21 (2008), no. 1, 283-304.

\bibitem[J]{J} Jacquet, H.
{\it On the residual spectrum of $GL(n)$,}
in Lie groups and Representations II, (College Park, Md., 1982/1983), Lecture Notes in Math. {\bf 1041}, Springer-Verlag,
New York (1984), 185--208.

\bibitem[K1]{K1} Kim, H.H.,
{\it The residual spectrum of $G_2$,}
Canad. J. Math., {\bf 48} (1996), 1245--1272.

\bibitem[K2]{K2} Kim, H.H.,
{\it Residual spectrum of odd orthogonal groups,}
Int. Math. Res. Notices, {\bf 17} (2001), 873--906.

\bibitem[KL]{KL} Kazhdan, D., Luzstig, G.,
{\it Proof of the Deligne-Langlands conjecture for Hecke Algebras,}
Invent. Math. {\bf 87} (1987), 153--215.

\bibitem[KO]{KO} Kazhdan, D., Okounkov, A.,
{\it On the unramified Eisenstein spectrum},
arXiv:2203.03486 (2022).


\bibitem[La1]{La1} Langlands, R.P.,
{\it On the functional equations satisfied by Eisenstein series,}
Lecture Notes in Mathematics \textbf{544}, Springer, 1976.

\bibitem[La2]{La2} Langlands, R.P.,
{\it Representations of abelian algebraic groups,}
Pacific Journal of Mathematics, \textbf{181}(3) (1997), 231--250.

\bibitem[Lu]{Lu} Lusztig, G.,
{\it Affine Hecke algebras and their graded version,}
J. Am. Math. Soc. \textbf{2}(3) (1989), 59--635.

\bibitem[M1]{M1} Moeglin, C.,
{\it Orbites unipotentes et spectre discret non ramifi\'e. Le cas des groupes classiques d\'eploy\'es,}
Compos. Math. \textbf{77}(1) (1991), 1--54.

\bibitem[M2]{M2} Moeglin, C.,
{\it Sur les formes automorphes de carr\'e int\'egrable,}
in Proceedings of the International Congress of Mathematicians, Vol. II (Kyoto, 1990),  815--819, Math. Soc. Japan, Tokyo, 1991.

\bibitem[Mi]{Mi} Miller, S. D.,
{\it Residual automorphic forms and spherical unitary representations of exceptional groups,}
Annals of mathematics {\bf 177} (2013), 1169--1179.

\bibitem[MW1]{MW1} Moeglin, C., Waldspurger, J.-L.,
{\it Le spectre r\'esiduel de $GL(n)$,}
Ann. Sci. \'Ecole Norm. Sup. {\bf 22} (1989), 605--674.

\bibitem[MW2]{MW2} Moeglin, C., Waldspurger, J.-L.,
{\it Spectral decomposition and Eisenstein series,}
Cambridge tracts in Mathematics \textbf{113}, Cambridge University Press, 1995.

 \bibitem[O1]{O-Sp} Opdam, E.M.,
 {\it On the spectral decomposition of affine Hecke algebras,}
 J.Inst. Math. Jussieu \textbf{3}(4) (2004), 531--648

 \bibitem[O2]{O-Supp} Opdam, E.M.,
 {\it The central support of the Plancherel measure of an affine Hecke algebra,}
 Moscow Mathematical Journal \textbf{7}(4) (2007), 723--741.

 \bibitem[O3]{O-STM} Opdam, E.M.,
 {\it Spectral transfer morphisms for unipotent affine Hecke algebras.}
 arXiv:1310.7790.

 \bibitem[Re2]{Re1} Reeder, M.,
 {\it Formal degrees and $L$-packets of unipotent discrete series representations of exceptional $p$-adic groups,}
 with an appendix by Frank Luebeck. Crelle's Journal {\bf 520} (2000), 37--93.

\bibitem[Re3]{Re2} Reeder, M.,
{\it Torsion automorphisms of simple Lie algebras,}
L'Enseignement Mathematique {\bf 56}(2) (2010), pages 3--47.

\bibitem[Si]{Si} Silberger, A.,
{Introduction to harmonic analysis on reductive p-adic groups} (Mathematical Notes No. 23), Princeton University Press, 1979.

\bibitem[Ta]{Ta} Tate, J.,
{\it Fourier Analysis in Number Fields and Hecke's Zeta-Functions.}
In: Algebraic Number Theory, eds.: Cassels, Frolich, Academic Press, 1967, pp. 305--347.

\bibitem[We]{We} Weil, A.,
{\it Basic number theory.}
Springer-Verlag, New York-Berlin, 3rd edition, 1974.

\end{thebibliography}
\end{document}